\newcommand{\R}{\mathbb R}
\newcommand{\Z}{\mathbb Z}
\newcommand{\C}{\mathbb C}
\newcommand{\B}{\mathbb B}
\newcommand{\D}{\mathbb D}
\newcommand{\N}{\mathbb N}
\def\Re{{\sf Re}\,}
\def\Im{{\sf Im}\,}
\newtheorem{theorem}{Theorem}[section]
\newtheorem{lemma}[theorem]{Lemma}
\newtheorem{proposition}[theorem]{Proposition}
\newtheorem{corollary}[theorem]{Corollary}
\theoremstyle{definition}
\newtheorem{definition}[theorem]{Definition}
\theoremstyle{remark}
\newtheorem{remark}[theorem]{Remark}
\numberwithin{equation}{section}
\numberwithin{equation}{section}
\def\sideremark#1{\ifvmode\leavevmode\fi\vadjust{
\vbox to0pt{\hbox to 0pt{\hskip\hsize\hskip1em
\vbox{\hsize1.5cm\tiny\raggedright\pretolerance10000
\noindent #1\hfill}\hss}\vbox to8pt{\vfil}\vss}}}
\begin{document}
\title[Non-simply connected Fatou components]{Automorphisms of $\C^k$ with an invariant non-recurrent attracting Fatou component biholomorphic to $\C\times (\C^\ast)^{k-1}$}

\author[F. Bracci]{Filippo Bracci$^{\diamondsuit\star}$}\thanks{$^\diamondsuit$Partially supported by the ERC grant ``HEVO - Holomorphic Evolution Equations'' n. 277691 and the MIUR Excellence Department Project awarded to the  
Department of Mathematics, University of Rome Tor Vergata, CUP E83C18000100006}
\address{F. Bracci: Dipartimento di Matematica\\
Universit\`{a} di Roma \textquotedblleft Tor Vergata\textquotedblright\ \\
Via Della Ricerca Scientifica 1, 00133 \\
Roma, Italy} \email{fbracci@mat.uniroma2.it}
\author[J. Raissy]{Jasmin Raissy$^\spadesuit$}\thanks{$^\spadesuit$Partially supported by the ANR
 project LAMBDA,  ANR-13-BS01-0002 and by
 the FIRB2012 grant ``Differential Geometry and Geometric Function Theory'', RBFR12W1AQ 002.}
\address{J. Raissy: Institut de Math\'ematiques de Toulouse; UMR5219,
Universit\'e de Toulouse; CNRS, UPS IMT, F-31062 Toulouse Cedex 9, France.}\email{jraissy@math.univ-toulouse.fr}
\author[B. Stens\o nes]{Berit Stens\o nes$^{\clubsuit\star}$}
\thanks{$^\clubsuit$Partially supported by the FRIPRO Project n.10445200}
\address{B. Stens\o nes: Department of Mathematics, Norwegian University of Science and Technology, Alfred Getz vei 1, Sentralbygg II 950, Trondheim, Norway}\email{berit.stensones@math.ntnu.no}
\thanks{$^\star$Members  of the 2016-17 CAS project {\sl Several Complex Variables and Complex Dynamics}.}
\subjclass[2010]{Primary 37F50; Secondary 32A30, 39B12}
\keywords{Fatou sets; holomorphic dynamics; hyperbolic distance; Runge domains}

\maketitle
\begin{abstract}
We prove the existence of automorphisms of $\mathbb C^k$, $k\ge 2$, having an invariant, non-recurrent  Fatou component biholomorphic to $\mathbb C \times (\mathbb C^\ast)^{k-1}$ which is attracting, in the sense that all the orbits converge to a fixed point on the boundary of the component. As a corollary, we obtain a Runge copy of $\mathbb C \times (\mathbb C^\ast)^{k-1}$ in $\mathbb C^k$. 
The constructed Fatou component also avoids $k$ analytic discs intersecting transversally at the fixed point. 
\end{abstract}

\tableofcontents

\section*{Introduction}

Let $F$ be a holomorphic endomorphism of $\C^k$, $k\ge 2$. In the study of the dynamics of $F$, that is of the behavior of its iterates, a natural dichotomy is given by the division of the space into the {\sl Fatou set} and the {\sl Julia set}. The Fatou set is the largest open set where the family of iterates is locally normal, that is the set formed by all points having an open neighborhood where the restriction of the iterates of the map forms a normal family. The Julia set is the complement of the Fatou set and is the part of the space where chaotic dynamics happens. A {\sl Fatou component} is a connected component of the Fatou set.

A Fatou component  $\Omega$ for a map $F$ is called {\sl invariant} if $F(\Omega)=\Omega$.

We call an invariant Fatou component $\Omega$ for a map $F$ {\sl attracting} if there exists a point $p\in \overline{\Omega}$ with $\lim_{n\to \infty}F^n(z)=p$ for all $z\in \Omega$. Note that, in particular, $p$ is a fixed point for $F$. If $p\in \Omega$ then  $\Omega$ is called {\sl recurrent}, and it is called {\sl non-recurrent} if $p\in \partial \Omega$.

Every attracting recurrent Fatou component of a holomorphic automorphism $F$ of $\C^k$ is biholomorphic to $\C^k$. In fact it is the global basin of attraction of $F$ at $p$, which is an attracting fixed point, that is all eigenvalues of $dF_p$ have modulus strictly less than $1$ (see \cite{PVW} and \cite{RR}).

As a consequence of the results obtained by T. Ueda in \cite{U0} and of Theorem 6 in \cite{LP} by M. Lyubich and H. Peters, every non-recurrent invariant attracting Fatou component $\Omega$ of a \emph{polynomial} automorphism of $\C^2$ is biholomorphic to $\C^2$. L. Vivas and the third named author in \cite{SV} produced examples of automorphisms of $\C^3$ having attracting non-recurrent Fatou component biholomorphic to $\C^2\times\C^*$.

\smallskip 
The main result of our paper is the following:

\begin{theorem}\label{main}
Let $k\ge 2$. There exist holomorphic automorphisms of $\C^k$  having an invariant, non-recurrent,  attracting Fatou component biholomorphic to $\C\times (\C^*)^{k-1}$. 
\end{theorem}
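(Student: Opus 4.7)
The plan is to build, for each $k\ge 2$, an explicit automorphism $F$ of $\C^k$ generalizing the Stens\o nes--Vivas construction of \cite{SV}. The automorphism will have a boundary fixed point $p$ at which the differential is of \emph{semi-parabolic} type: one eigenvalue equal to $1$ with tangent-to-the-identity higher-order terms (contributing the $\C$ factor via a Leau--Fatou petal), and $k-1$ neutral eigenvalues $\lambda_1,\ldots,\lambda_{k-1}$ of modulus one in transverse directions, with the $2$-jet of $F$ chosen so that the transverse dynamics is effectively contracting toward $p$ along orbits that wrap around $k-1$ invariant analytic discs through $p$ (contributing the $(\C^\ast)^{k-1}$ factors).

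First I would fix local coordinates $(w,u_1,\dots,u_{k-1})$ at $p$ and write $F$ in a convenient normal form such as
\[
F(w,u)=\bigl(w+w^2+O(w^3),\ \lambda_1 u_1(1+c_1 w+\dots),\ \dots,\ \lambda_{k-1} u_{k-1}(1+c_{k-1} w+\dots)\bigr),
\]
with coefficients $c_j$ chosen so that the parabolic drift dampens $u_j$ along orbits. The candidate component $\Omega$ is defined as the set of points whose forward orbits enter and remain in a petal around the parabolic axis, while staying bounded away from each of the $k$ invariant discs $\{w=0\}$ and $\{u_j=0\}$. The dynamical core of the proof is then: (a) every orbit in $\Omega$ converges to $p$, via the asymptotic $w_n\sim -1/n$ together with uniform bounds on the $u_j$; (b) a global Fatou coordinate $\varphi:\Omega\to\C$ exists on the parabolic axis; and (c) for each $j$, a renormalized limit
\[
\psi_j(z)=\lim_{n\to\infty}\lambda_j^{-n}u_j(F^n(z))\exp\!\Bigl(-\sum_{i=0}^{n-1}c_j\,w(F^i(z))\Bigr)
\]
converges and defines a holomorphic map $\psi_j:\Omega\to\C^\ast$.

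The tuple $\Phi=(\varphi,\psi_1,\dots,\psi_{k-1}):\Omega\to\C\times(\C^\ast)^{k-1}$ then semiconjugates $F$ to the model $(w,u)\mapsto(w+1,\lambda_1 u_1,\dots,\lambda_{k-1}u_{k-1})$; biholomorphicity follows by spreading a local biholomorphism on a fundamental domain at $p$ backwards along orbits via $F$-equivariance, while surjectivity comes from transitivity of the model translation. To see that $\Omega$ is a \emph{full} Fatou component, one checks that each of the $k$ invariant discs through $p$ lies on $\partial\Omega$ and carries qualitatively different (in particular, non-convergent) dynamics, so that normality fails across $\partial\Omega$.

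The main obstacle is the joint control of the parabolic and neutral directions needed for the infinite-product definitions of $\psi_j$ to converge: one requires sharp summability of $\sum_i w(F^i(z))$ along orbits in $\Omega$ and uniform repulsion normal to each invariant disc, both of which impose stringent conditions on the higher-order coefficients of $F$. Producing a single $F$ that is simultaneously a \emph{global} automorphism of $\C^k$ and satisfies these delicate local bounds is the central technical difficulty; it is most likely handled via a carefully engineered composition of shears, or by a Runge-approximation argument starting from a candidate polynomial normal form.
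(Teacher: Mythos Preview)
Your local model is not the one the paper uses, and your argument that $\Omega$ is the \emph{full} Fatou component has a genuine gap.

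\medskip

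\textbf{Different local model.} The paper does not take a semi-parabolic fixed point with one eigenvalue equal to $1$ and $k-1$ neutral transverse eigenvalues. Instead, \emph{all} $k$ eigenvalues $\lambda_1,\ldots,\lambda_k$ are unimodular non-roots of unity, chosen one-resonant with $\lambda_1\cdots\lambda_k=1$ and P\"oschel-admissible; the germ is
\[
F_N(z_1,\ldots,z_k)=\Bigl(\lambda_j z_j\bigl(1-\tfrac{z_1\cdots z_k}{k}\bigr)\Bigr)_{j=1}^k .
\]
The parabolic behaviour lives in the resonant monomial $u=z_1\cdots z_k$, not in a distinguished coordinate direction. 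This yields $k$ (not $k-1$) invariant analytic discs through the origin, the P\"oschel discs tangent to the axes, and the component avoids all of them. The Fatou coordinate $\psi$ is built from the $u$-dynamics, and the remaining coordinates $\sigma_j$ come from the products $z_j\cdots z_k$; the identification $\Omega\simeq\C\times(\C^\ast)^{k-1}$ is obtained not by spreading a fundamental-domain biholomorphism but by showing $g_1:\Omega\to\C$ is a holomorphic fiber bundle with fiber $(\C^\ast)^{k-1}$ and linear transition functions, hence trivial. Your semi-parabolic picture may well be workable, but it is a different construction, and the paper's choice is what makes the next step go through.

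\medskip

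\textbf{The gap.} Your argument that $\Omega$ equals the Fatou component $V$ is essentially ``the invariant discs carry rotational dynamics, so normality fails across $\partial\Omega$''. That only shows the discs lie in the Julia set, hence outside $V$; it does \emph{not} rule out $V\supsetneq\Omega$ with $V$ still disjoint from the discs. The paper itself flags this trap with the example $(z,w)\mapsto(z/2,w/2)$: the completely invariant domain $\C^\ast\times\C^\ast$ is characterized by an orbit condition ($|z_n|\sim|w_n|$) and avoids both axes, yet is not a Fatou component. The actual proof of $\Omega=V$ is the paper's main new idea: first characterize $\Omega$ by the balanced-orbit condition $|z_{j,n}|\sim|z_{i,n}|\sim n^{-1/k}$; then use the P\"oschel discs to trap $V$ locally inside a product of punctured discs; finally, if $q\in V\setminus\Omega$, compare its orbit to that of a nearby $p\in\Omega$ via the Kobayashi distance of $\D^\ast$. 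The unbalanced orbit of $q$ forces $k_{\D^\ast}$ between projected iterates to be bounded below by a positive constant depending only on $\beta$, contradicting the decrease of Kobayashi distance under forward iteration. Your outline offers no substitute for this step, and without it you cannot conclude that $\Omega$ is a Fatou component at all.

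\medskip

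A minor point: obtaining a global automorphism with the prescribed jet is not the central difficulty you make it out to be; it is off-the-shelf from Weickert and Forstneri\v{c}. The hard work is entirely in the $\Omega=V$ step.
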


In particular, this shows that there exist (non polynomial) automorphisms of $\C^2$ having a non-simply connected attracting non-recurrent Fatou component. Our construction also shows that the invariant non-recurrent attracting Fatou component biholomorphic to $\C\times(\C^*)^{k-1}$ avoids $k$ analytic discs which intersect transversally at the fixed point. Moreover as a corollary of Theorem~\ref{main} and \cite[Proposition~5.1]{U0}, we obtain:

\begin{corollary}
Let $k\ge 2$. There exists a biholomorphic image of $\C\times(\C^*)^{k-1}$ in $\C^k$ which is Runge.
\end{corollary}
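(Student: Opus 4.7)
The plan is to deduce the corollary as a direct consequence of Theorem~\ref{main} combined with Ueda's Runge-type result cited from \cite[Proposition~5.1]{U0}. First, I would invoke Theorem~\ref{main} to obtain a holomorphic automorphism $F\in{\sf Aut}(\C^k)$ together with an invariant, non-recurrent, attracting Fatou component $\Omega\subset\C^k$ and a biholomorphism $\Phi\colon \C\times(\C^*)^{k-1}\to \Omega$. The attracting point $p\in \partial\Omega$ is a fixed point of $F$, and by the attracting property $F^n(z)\to p$ uniformly on compact subsets of $\Omega$.

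The second step is to recall Ueda's proposition, which asserts that an invariant Fatou component $\Omega$ of an automorphism of $\C^k$ on which the iterates converge, locally uniformly, to a fixed point is automatically Runge in $\C^k$. The idea in Ueda's proof (which I would only cite, not reproduce) is that, since $F$ extends to an automorphism of the whole space, one can approximate holomorphic functions on compact subsets $K\Subset\Omega$ by pulling them back through $F^{-n}$ and composing with the convergent iterates; a standard argument based on the convergence $F^n|_K\to p$ and polynomial approximation near the fixed point then produces global polynomial approximants, witnessing the Runge property.

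Applying this proposition to our $\Omega$ gives that $\Omega$ is a Runge open subset of $\C^k$. Since $\Phi$ is a biholomorphism from $\C\times(\C^*)^{k-1}$ onto $\Omega$, the image $\Phi(\C\times(\C^*)^{k-1})=\Omega$ is precisely the desired Runge biholomorphic copy of $\C\times(\C^*)^{k-1}$ in $\C^k$, proving the corollary.

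There is essentially no obstacle at this stage, as the full weight of the argument is already carried by Theorem~\ref{main}; the only care needed is to verify that the hypotheses of \cite[Proposition~5.1]{U0} are actually satisfied, namely that $F$ is a global automorphism (not merely a self-map), that $\Omega$ is $F$-invariant, and that the convergence $F^n\to p$ is locally uniform on $\Omega$ — all of which are guaranteed by the statement of Theorem~\ref{main} and the definition of attracting Fatou component given in the Introduction.
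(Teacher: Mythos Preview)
Your proposal is correct and follows exactly the approach indicated in the paper: the corollary is stated there as an immediate consequence of Theorem~\ref{main} together with \cite[Proposition~5.1]{U0}, and you have simply spelled out the verification of the hypotheses of Ueda's result. There is nothing to add.
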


The existence of an embedding of $\C\times\C^*$ as a Runge domain in $\C^2$ was a long standing open question,  positively settled by our construction. After a preliminary version of this manuscript was circulating,  F. Forstneri\v{c} and E. F. Wold constructed in \cite{FrancErlend} other examples of Runge embeddings of $\C\times \C^\ast$ in $\C^2$ (which do not arise from basins of attraction of automorphisms) using completely different techniques.
 
Notice that, thanks to the results obtained by J. P. Serre in \cite{Serre} (see also \cite[Theorem 2.7.11]{Horm}), every Runge domain $D\subset\C^k$ satisfies $H^q(D)=0$ for all $q\ge k$. Therefore the Fatou component in Theorem \ref{main} has the highest possible admissible non-vanishing cohomological degree.

The proof of Theorem \ref{main} is rather involved and we give an outline in the next section. In the rest of the paper, we will first go through the proof in the case $k=2$, and then show the modifications needed for all dimensions. 

The proof relies on a mixture of known techniques and new tools. We first choose a suitable germ having a local basin of attraction with the proper connectivity and extend it to an automorphism $F$ of $\C^k$. Using more or less standard techniques we extend the local basin to a global basin of attraction $\Omega$ of $F$ and then we define a Fatou coordinate. Next, we exploit a new construction to prove that the Fatou coordinate is in fact a fiber bundle map, allowing us to show that $\Omega$ is biholomorphic to $\C\times (\C^*)^{k-1}$. The final rather subtle point is to show that $\Omega$ is indeed a Fatou component. 
We have to introduce a completely new argument, which is based on P\"oschel's results in \cite{Po} and detailed estimates for the Kobayashi metric on certain domains.

\medskip

\noindent{\bf Acknowledgements.} Part of this paper was written while the first and the third named authors were visiting the Center for Advanced Studies in Oslo for the 2016-17 CAS project Several Complex Variables and Complex Dynamics. They both thank CAS for the support and for the wonderful atmosphere experienced there.

The authors also thank Han Peters for some useful conversations, and the anonymous referee, whose comments and remarks improved the presentation of the original manuscript.

\section{Outline of the proof in dimension 2}

For the sake of simplicity, we give the outline of the proof for $k=2$.
We start with a germ of biholomorphism at the origin of the form
\begin{equation}\label{form-intro}
F_N(z,w)=\left(\lambda z\left(1 - \frac{zw}{2}\right), \overline{\lambda}w \left(1 - \frac{zw}{2}\right)\right),
\end{equation}
where $\lambda\in \C$, $|\lambda|=1$, is not a root of unity and satisfies the Brjuno condition \eqref{eq:brjuno}.
Thanks to a result of  B. J.  Weickert \cite{W1} and F. Forstneri\v{c} \cite{F}, for any large $l\in \N$ there exists an automorphism $F$ of $\C^2$ such that 
\begin{equation}\label{Eq-motiv_2}
F(z,w)-F_N(z,w)=O(\|(z,w)\|^l).
\end{equation}
These kind of maps are a particular case of the so-called {\em one-resonant} germs. Recall that a germ of biholomorphism $F$ of $\C^2$ at the origin is called {\em one-resonant} if, 
denoting by $\lambda_1, \lambda_2$ the eigenvalues of its linear part, there exists a fixed multi-index $P = (p_1, p_2)\in\N^2$ with $p_1+p_2\ge 2$ such that all the resonances $\lambda_j - \lambda_1^{m_1}\lambda_2^{m_2}=0$, for $j=1,2$, are precisely of the form $\lambda_j = \lambda_j\cdot\lambda_1^{kp_1}\lambda_2^{kp_2}$ for some $k\ge 1$.

The local dynamics of one-resonant germs has been studied by the first named author with D. Zaitsev in \cite{BZ} (see also \cite{BRZ}). 

Let
\[
B:=\{(z,w)\in \C^2: zw\in S, |z|<|zw|^\beta, |w|<|zw|^\beta\},
\]
where $\beta\in (0,\frac{1}{2})$ and $S$ is a small sector in $\C$ with vertex at $0$ around the positive real axis.
In \cite{BZ} (see also Theorem \ref{Thm:BZ}) it has been proved that for sufficiently large $l$ the domain $B$ is forward invariant under $F$, the origin is on the boundary of $B$ and  $\lim_{n\to \infty}F^n(p)=0$ for all $p\in B$. 
Moreover, setting $x=zw,y=w$ (which are coordinates on $B$) the domain becomes $\{(x,y)\in \C\times\C^*: x\in S, |x|^{1-{\beta}}<|y|<|x|^{\beta}\}$. Hence $B$ is doubly connected.

Now let 
\[
\Omega:=\cup_{n\in \N}F^{-n}(B).
\] 
The domain $\Omega$ is connected but not simply connected. 

For a point $(z,w)\in \C^2$, let $(z_n,w_n):=F^n(z,w)$.
In Theorem \ref{characterized Omega} we show that
\[
\Omega=\{(z,w)\in \C^2\setminus\{(0,0)\}: \lim_{n\to \infty}\|(z_n,w_n)\|=0, \quad |z_n|\sim |w_n|\},
\]
and moreover, if $(z,w)\in \Omega$ then $|z_n|\sim |w_n|\sim \frac{1}{\sqrt{n}}$.
\smallskip

Having a characterization of the behavior of the orbits of a map on a completely invariant domain is however in general not enough to state that such a domain is the whole Fatou component, as this trivial example illustrates:  the automorphism $(z,w)\mapsto (\frac{z}{2}, \frac{w}{2})$ has the completely invariant domain $\C^\ast\times \C^\ast$ which is not a Fatou component but $|z_n|\sim |w_n|$. 

In order to prove that $\Omega$ coincides with the Fatou component $V$ containing it, we exploit the condition that $\lambda$ is also Brjuno (see Section \ref{FGB} for details). In this case there exist two $F$-invariant  analytic discs, tangent to the axes, where $F$ acts as an irrational rotation. In particular, one can choose local coordinates at $(0,0)$, which we may assume to be defined on the unit ball $\B$ of $\C^2$ and $B\subset \B$, such that $\{z=0\}$ and $\{w=0\}$ are not contained in $V\cap \B$. Let $\B_\ast:=\B\setminus\{zw=0\}$.
Now, if $V\neq \Omega$, we can take $p_0\in\Omega$, $q_0\in V\setminus \Omega$, and $Z$ a connected open set containing $p_0$ and $q_0$ and such that $\overline Z\subset V$. Moreover, since $\{F^n\}$ converges uniformly to the origin on $\overline Z$, up to replacing $F$ by one of its iterates, we can assume that the forward $F$-invariant set $W :=\cup_{n\in\N}F^n(Z)$ satisfies $W\subset \B_\ast$. By construction, for every $\delta >0$ we can find $p\in Z\cap \Omega$ and $q\in Z\cap (V\setminus \Omega)$ such that $k_W(p,q)\le k_Z(p,q) <\delta$, where $k_W$ is the Kobayashi (pseudo)distance of $W$.
By the properties of the Kobayashi distance, for every $n\in\N$ we have
\[
k_{\B_\ast}(F^n(p), F^n(q))\leq k_W(p, q)<\delta.
\]
Also, if $(z_n,w_n):=F^n(p)$, $(x_n,y_n):=F^n(q)$, then
\[
k_{\D^\ast}(z_n,x_n)<\delta, \quad k_{\D^\ast}(w_n,y_n)<\delta,
\]
where $\D^\ast$ is the punctured unit disc. Since $q\not\in\Omega$, $F^n(q)\not \in B$ for all $n\in\N$, and so (by the above mentioned characterization of orbits' behavior of points in $\Omega$) we can ensure that, up to passing to a subsequence, we have $|x_n|\not\sim |y_n|$. By the triangle inequality and properties of the Kobayashi distance of $\D^\ast$, the shape of $B$ forces $k_{\D^\ast}(x_n,y_n)$ to be bounded from below by a constant depending only on $\beta$, leading to a contradiction (see Theorem \ref{Fatou-Omega} for details).

\smallskip 
Finally, in order to show that $\Omega$ is biholomorphic to $\C\times \C^\ast$ we construct a fibration from $\Omega$ to $\C$ in such a way that $\Omega$ is a line bundle minus the zero section over $\C$, hence, trivial. In fact, for this aim we do not need the Brjuno condition on $\lambda$.

We first prove in Section~\ref{local-coordi} the existence of a univalent map $Q$ on $B$ which intertwines $F$ on $B$ with a simple overshear. The first component $\psi$ of $Q$ is essentially the Fatou coordinate of the projection of $F$ onto the $zw$-plane and satisfies
\[
\psi \circ F=\psi+1.
\]
The second component $\sigma$ is the local uniform limit on $B$ of the sequence $\{\sigma_n\}$ defined by
\[
\sigma_n(z,w):= \lambda^n \pi_2(F^n(z,w)) \exp\left(\frac{1}{2}{\sum_{j=0}^{n-1} \frac{1}{\psi(z,w)+j}}\right),
\]
and satisfies the functional equation
\[
\sigma \circ F=\overline{\lambda}e^{-\frac{1}{2\psi}}\sigma.
\] 

Next, using dynamics, we extend such a map to a univalent map $G$ defined on a domain $\Omega_0\subset\Omega$, and we use it to prove that $\Omega$ is a line bundle minus the zero section over $\C$. Since all line bundles over $\C$ are globally holomorphically trivial, we obtain that $\Omega$ is biholomorphic to $\C\times\C^\ast$ (see   Section \ref{topology} for details).

We will now go through the proof in great detail in dimension $2$ and in the last section we will give the changes needed for the higher dimensional case.


\section*{Notations and conventions in $\C^2$}

We set up here some notations and conventions we shall use throughout the paper. 

We let $\pi:\C^2\to \C$,   $\pi_1:\C^2\to \C$,  $\pi_2:\C^2\to \C$ be defined by 
\[
\pi(z,w)=zw,\quad \pi_1(z,w)=z,\quad \pi_2(z,w)=w.
\]
If $F:\C^2\to \C^2$ is a holomorphic map, we denote by $F^n$ the $n$-th iterate of $F$, $n\in \N$, defined by induction as $F^n=F\circ F^{n-1}$, $F^0={\sf id}$. Moreover, for $(z,w)\in \C^2$ and $n\in \N$, we let
\[
u_n:=\pi(F^n(z,w)), \quad U_n:=\frac{1}{u_n}, \quad z_n:=\pi_1(F^n(z,w)), \quad w_n:=\pi_2(F^n(z,w)).
\]

If $f(n)$ and $g(n)$ are real positive functions of $n\in \N$, we write $f(n)\sim g(n)$, if there exist $0<c_1<c_2$ such that $c_1 f(n)<g(n)<c_2 f(n)$ for all $n\in \N$. Moreover, we use the Landau little/big ``O'' notations, namely, we write $f(n)=O(g(n))$, if there exists $C>0$ such that $f(n)\leq C g(n)$ for all $n\in \N$, while we write $f(n)=o(g(n))$, if $\lim_{n\to \infty}\frac{f(n)}{g(n)}=0$.

\section{The local basin of attraction $B$}\label{localB}

In this section we recall the construction of the local basin of attraction, and we provide the local characterization that we use in our construction.

\smallskip
Let $F_N$ be a germ of biholomorphism of $\C^2$, fixing the origin, of the form
\begin{equation}\label{Expression FN}
F_N(z,w)=\left(\lambda z\left(1 - \frac{zw}{2}\right), \overline{\lambda}w \left(1 - \frac{zw}{2}\right)\right),
\end{equation}
where $\lambda\in \C$, $|\lambda|=1$, is not a root of unity.

\begin{definition}
For $\theta\in (0,\frac{\pi}{2})$ and $R>0$ we let 
\[
S(R,\theta):=\left\{\zeta\in \C: \left|\zeta-\frac{1}{2R}\right|<\frac{1}{2R},  \ \  |{\sf Arg} (\zeta)|<\theta\right\}.
\]
Also,  we let
\[
H(R,\theta):=\{\zeta\in \C: \Re \zeta>R, |{\sf Arg}(\zeta)|<\theta\}.
\]

\end{definition}

D. Zaitsev and the first named author proved that any small variation of $F_N$ admits a local basin of attraction. In order to state the result in our case, let us introduce some sets:

\begin{definition}
For $\beta\in (0,\frac{1}{2})$ we let
\[
W(\beta):=\{(z,w)\in \C^2: |z|<|zw|^\beta,\ \  |w|<|zw|^\beta\}.
\]
For every $R\geq 0$, $\beta\in (0,\frac{1}{2})$ and $\theta\in (0,\frac{\pi}{2})$, we let 
\[
B(\beta, \theta, R):=\{(z,w)\in W(\beta): zw\in S(R, \theta)\}.
\]
\end{definition}

In \cite[Theorem 1.1]{BZ} it is proven:

\begin{theorem}\label{Thm:BZ}
Let $F_N$ be a germ of biholomorphism at $(0,0)$ of the form \eqref{Expression FN}. Let $\beta_0\in (0,1/2)$ and let $l\in \N$, $l\geq 4$ be such that $\beta_0(l+1)\geq 4$. Then for every $\theta_0\in(0, \pi/2)$ and for any germ of biholomorphism $F$ at $(0,0)$ of the form
\[
F(z,w)=F_N(z,w)+O(\|(z,w)\|^l)
\] 
there exists $R_0>0$ such that the (non-empty) open set $B_{R_0}:=B(\beta_0,\theta_0, R_0)$ is a uniform local basin of attraction for  $F$, that is $F(B_{R_0})\subseteq B_{R_0}$, and $\lim_{n\to \infty}F^n(z,w)=(0,0)$ uniformly in $(z,w)\in B_{R_0}$. 
\end{theorem}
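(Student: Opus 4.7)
My plan is to reduce the two-dimensional dynamics to a one-dimensional parabolic problem via the projection $\pi(z,w)=zw$, and then control the individual components using the cone condition defining $W(\beta_0)$.

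I would first compute $\pi\circ F_N(z,w)=u(1-u/2)^2$ for $u:=zw$, so
\[
\pi\circ F(z,w)=u(1-u/2)^2+O(\|(z,w)\|^{l+1}).
\]
On $W(\beta_0)$ one has $\|(z,w)\|^{l+1}\le C|u|^{\beta_0(l+1)}\le C|u|^4$, and the assumption $\beta_0(l+1)\ge 4$ is exactly what makes the remainder negligible next to the leading term $-u^2$. Passing to the coordinate $U:=1/u$, the scalar dynamics becomes $U_{n+1}=U_n+1+O(1/U_n)$, uniformly for points of our cone. Since $S(R,\theta)=\{u:1/u\in H(R,\theta)\}$, a standard Leau--Fatou argument shows that $H(R_0,\theta_0)$ is forward invariant once $R_0$ is large enough, with $U_n=n+o(n)$, so $|u_n|\sim 1/n$ uniformly in the initial datum.

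Next, I would check invariance of the cone $W(\beta_0)$. From $z_{n+1}=\lambda z_n(1-u_n/2)+O(\|(z_n,w_n)\|^l)$ and the lower bound $|u_{n+1}|\ge |u_n||1-u_n/2|^2(1-C|u_n|^{\beta_0(l+1)-1})$, the ratio $r_n:=|z_n|/|u_n|^{\beta_0}$ satisfies
\[
r_{n+1}\le r_n |1-u_n/2|^{1-2\beta_0}(1+O(|u_n|^2))+C'|u_n|^{\beta_0(l-1)}.
\]
On $S(R_0,\theta_0)$ one has $\Re u_n\ge \cos(\theta_0)|u_n|$, so $|1-u_n/2|^{1-2\beta_0}\le 1-c|u_n|$ for some $c>0$, using $1-2\beta_0>0$. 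The exponent $\beta_0(l-1)\ge 4-2\beta_0>3$ makes the additive error much smaller than the contractive factor $c|u_n|$ as soon as $R_0$ is large; induction then gives $r_n\le 1$ for all $n$ whenever $r_0\le 1$. The symmetric argument applies to $|w_n|/|u_n|^{\beta_0}$, so $B_{R_0}$ is forward invariant. Uniform convergence $F^n\to 0$ on $B_{R_0}$ then follows from $|u_n|\to 0$ uniformly, together with $\|(z_n,w_n)\|\le \sqrt{2}\,|u_n|^{\beta_0}$.

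The main obstacle is the balance in the inductive step for $r_n$: the contractive term coming from the parabolic dynamics on $S(R_0,\theta_0)$ is only of order $|u_n|$, while the perturbation-induced additive error is of order $|u_n|^{\beta_0(l-1)}$. The hypothesis $\beta_0(l+1)\ge 4$ is calibrated precisely to ensure $\beta_0(l-1)>1$ by a comfortable margin, which is what lets the induction close and the cone be preserved for $R_0$ sufficiently large.
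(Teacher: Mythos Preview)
The paper does not prove this theorem; it is quoted from \cite[Theorem~1.1]{BZ}. Your outline is correct and follows what is the natural strategy (and, judging from the estimates the present paper carries out afterwards, e.g.\ in Lemma~\ref{go-good-down} and Proposition~\ref{BRZ}, also the one in \cite{BZ}): project to $u=zw$ to obtain a one-variable parabolic map, control the sector via the Leau--Fatou translation $U\mapsto U+1+O(1/U)$, and then close the cone condition $|z|,|w|<|u|^{\beta_0}$ through a contraction estimate on $|1-u/2|^{1-2\beta_0}$, exploiting $\beta_0<1/2$ and $\Re u>\cos\theta_0\,|u|$.

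One point you should make explicit: the two invariance claims are mutually dependent. Your bound $U_{n+1}=U_n+1+O(1/U_n)$ on the $u$-dynamics is valid only while $(z_n,w_n)\in W(\beta_0)$ (otherwise the remainder $O(\|(z_n,w_n)\|^{l+1})$ is not controlled by $|u_n|$), and your contraction for $r_n=|z_n|/|u_n|^{\beta_0}$ is valid only while $u_n\in S(R_0,\theta_0)$. As written, you first argue that the sector is invariant ``for points of our cone'' and then that the cone is preserved using $u_n\in S(R_0,\theta_0)$, which is circular. The fix is routine: run a single induction on the combined hypothesis $(z_n,w_n)\in B(\beta_0,\theta_0,R_0)$, establishing $u_{n+1}\in S(R_0,\theta_0)$ and $r_{n+1}\le 1$ (together with the symmetric bound for $|w_{n+1}|/|u_{n+1}|^{\beta_0}$) in the same step. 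With that adjustment your argument is complete.
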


\begin{definition} 
Let $F(z,w)=F_N(z,w)+O(\|(z,w)\|^l)$ be as in Theorem \ref{Thm:BZ} and fix $\theta_0\in(0, \pi/2)$. We set
\[
B:=B_{R_0}=B(\beta_0,\theta_0, R_0).
\]
\end{definition} 
 
In the following, we shall use some properties of $B$, that we prove below. We start with a lemma, allowing us to characterize the pre-images of $B$.

\begin{lemma}\label{go-good-down}
Let $F$ and $B$ be as in Theorem \ref{Thm:BZ}. Let $\beta\in (0,\frac{1}{2})$ be such that $\beta(l+1)> 2$ and $(z,w)\in \C^2$ such that $(z_n,w_n)\to (0,0)$ as $n\to \infty$. If there exists $n_0\in \N$ such that $(z_n,w_n)\in W(\beta)$ for all $n\geq n_0$, then 
\begin{enumerate}
\item $\lim_{n\to \infty}nu_n=1$  and $\lim_{n\to \infty}\frac{u_n}{|u_n|}=1$ (in particular, $|u_n|\sim \frac{1}{n}$),
\item $|z_n| \sim n^{-1/2}$ and $|w_n| \sim n^{-1/2}$,
\item for every $\gamma\in (0,1/2)$ there exists $n_\gamma\in \N$ such that $(z_n,w_n)\in W(\gamma)$ for all $n\geq n_\gamma$.
\end{enumerate}
In particular, $(z_n,w_n)\in B$ eventually.
\end{lemma}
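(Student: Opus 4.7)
The plan is to reduce to the one-dimensional parabolic dynamics on the product coordinate $u_n = z_n w_n$, and then to read off the asymptotics of $|z_n|$ and $|w_n|$ from the sectorial convergence of $u_n$.

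First I would compute the dynamics on $u$. By the very form of $F_N$,
\[
\pi(F_N(z,w)) = zw\left(1-\tfrac{zw}{2}\right)^2 = u - u^2 + \tfrac{u^3}{4},
\]
so the full map $F$ gives $u_{n+1} = u_n - u_n^2 + O(u_n^3) + O(\|(z_n,w_n)\|^{l+1})$. For $n\ge n_0$ one has $(z_n,w_n)\in W(\beta)$, hence $\|(z_n,w_n)\| = O(|u_n|^\beta)$ and the last error is $O(|u_n|^{\beta(l+1)}) = o(|u_n|^2)$, using $\beta(l+1)>2$. Passing to the reciprocal $U_n = 1/u_n$ and expanding yields the classical parabolic relation $U_{n+1} = U_n + 1 + o(1)$, so by telescoping and Cesaro $U_n = n + o(n)$. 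This immediately gives $nu_n\to 1$, and writing $U_n = n(1+o(1))$ one has $\arg U_n\to 0$, hence $u_n/|u_n|\to 1$, proving (1).

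Next, for (2), I would control the ratios $|z_{n+1}/z_n|$. Combining $z_{n+1} = \lambda z_n(1-u_n/2) + E_n$ with $|E_n| = O(\|(z_n,w_n)\|^l) = O(|u_n|^{\beta l})$ and the lower bound $|z_n| > |u_n|/|w_n| > |u_n|^{1-\beta}$ (which comes from $|z_n w_n|=|u_n|$ together with the $W(\beta)$ condition) gives
\[
\log|z_{n+1}| - \log|z_n| = -\tfrac{1}{2}\Re u_n + O(|u_n|^2) + O(|u_n|^{\beta(l+1)-1}).
\]
Since $\beta(l+1)-1>1$ and $|u_n|\sim 1/n$ by (1), the last two remainders are summable in $n$; telescoping together with $\sum_{k<n}\Re u_k = \log n + O(1)$ (which uses $\Re u_k\sim 1/k$ from (1)) gives $\log|z_n| = -\tfrac{1}{2}\log n + O(1)$, that is $|z_n|\sim n^{-1/2}$. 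The same argument for $w_n$ completes (2).

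Part (3) is then immediate: for any $\gamma\in(0,1/2)$, $|z_n|/|u_n|^\gamma \sim n^{\gamma-1/2}\to 0$, and similarly for $|w_n|$, so $(z_n,w_n)\in W(\gamma)$ for all $n$ sufficiently large. To conclude that $(z_n,w_n)\in B = B(\beta_0,\theta_0,R_0)$ eventually, I would apply (3) with $\gamma=\beta_0$ and note that the inversion $\zeta\mapsto 1/\zeta$ sends $S(R_0,\theta_0)$ onto $H(R_0,\theta_0)$; since $U_n = n + o(n)$ satisfies $\Re U_n\to+\infty$ and $\arg U_n\to 0$, $U_n$ lies in $H(R_0,\theta_0)$ for large $n$, hence $u_n\in S(R_0,\theta_0)$. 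The main obstacle is the careful bookkeeping in the second step: the lower bound $|z_n|>|u_n|^{1-\beta}$ extracted from the $W(\beta)$ constraint is essential to make the error in $\log|z_{n+1}/z_n|$ decay like $|u_n|^{\beta(l+1)-1}$ with summable exponent, and this is precisely where the quantitative hypothesis $\beta(l+1)>2$ enters.
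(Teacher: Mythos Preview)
Your proposal is correct and, for parts (1), (3), and the final conclusion, matches the paper essentially line for line. For part (2) there is a minor but genuine organizational difference worth noting.

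The paper expands $z_{n+1}$ by induction as
\[
z_{n+1}=z_0\lambda^n\prod_{j=0}^n\Big(1-\tfrac{u_j}{2}\Big)+\sum_{j=0}^n R_l^1(z_j,w_j)\prod_{k=j+1}^n\lambda\Big(1-\tfrac{u_k}{2}\Big),
\]
estimates the product as $\sim n^{-1/2}$ and the error sum as $O(n^{-1/2})$ (using only the \emph{upper} bound $\|(z_j,w_j)\|\le|u_j|^\beta$), thereby obtaining $|z_n|\le Cn^{-1/2}$; the matching lower bound is then deduced a posteriori from $|z_nw_n|\sim 1/n$ together with $|w_n|\le Cn^{-1/2}$. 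You instead take logarithms of the ratio $z_{n+1}/z_n$ and use the \emph{lower} bound $|z_n|>|u_n|^{1-\beta}$ (extracted from $|w_n|<|u_n|^\beta$) to turn the additive error $R_l^1$ into a multiplicative one of size $O(|u_n|^{\beta(l+1)-1})$, so that both bounds come out simultaneously from one telescoping. Both routes use $\beta(l+1)>2$ in exactly the same place, and each is perfectly valid.

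One small imprecision to tighten: the step $\sum_{k<n}\Re u_k=\log n+O(1)$ does not follow from $\Re u_k\sim 1/k$ alone. You need the sharper estimate $u_k=1/k+O((\log k)/k^2)$, which does come out of your own recursion once you observe that the $o(1)$ in $U_{k+1}=U_k+1+o(1)$ is actually $O(1/|U_k|)=O(1/k)$, hence $U_n=n+O(\log n)$. (The paper's analogous step $\prod_j|1-u_j/2|\sim n^{-1/2}$ tacitly uses the same refinement.)
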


\begin{proof} We can locally write $F$ in the form 
\begin{equation}\label{automFp-local}
F(z,w)=\left(\lambda z\left (1-\frac{zw}{2} \right)+R_l^1(z,w), \overline{\lambda} w\left (1-\frac{zw}{2} \right)+ R_l^2(z,w)\right),
\end{equation}
where $R_l^j(z,w)=O(\|(z,w)\|^l)$, $j=1,2$.

Since
$(z_n,w_n)\to (0,0)$, we have
\[
U_{n+1}=U_n\left(1+\frac{1}{U_n}+O\left(\frac{1}{|U_n|^2}, |U_n|\|(z_n,w_n)\|^{l+1}\right)\right).
\]
For $n\geq n_0$, $O(\|(z_n,w_n)\|^{l+1})$ is at most an $O(|u_n|^{\beta(l+1)})= O\left(\frac{1}{|U_n|^{\beta(l+1)}}\right)$, since $\beta(l+1)> 2$. Hence,
\begin{equation}\label{eq:U-n-n1}
U_{n+1}=U_n\left(1+\frac{1}{U_n}+O\left(\frac{1}{|U_n|^{\beta(l+1)-1}}, \frac{1}{|U_n|^2}\right)\right).
\end{equation}
Fix $\epsilon>0$. Let  $c:=1+\epsilon$. Notice that, by \eqref{eq:U-n-n1}, there exists $n_c\geq n_0$  such that for all $n\geq n_c$, $\left|U_{n+1}-U_n-1\right|<(c-1)/c$.
Arguing by induction on $n$, it easily follows that for all $n\geq n_c$ we have
\begin{equation}\label{eq:calim1}
\Re U_n\geq \Re U_{n_c}+\frac{n-n_c}{c},
\end{equation}
and 
\begin{equation}\label{eq:calim2}
|U_n|\leq |U_{n_c}|+c(n-n_c).
\end{equation}
Letting $\epsilon\to 0^+$ we obtain that
\begin{equation}\label{U-n-goes0}
\lim_{n\to \infty} \frac{\Re U_n}{n}=\lim_{n\to \infty} \frac{| U_n|}{n}=1.
\end{equation}
In particular, this means that $\lim_{n\to \infty} n\Re u_n=\lim_{n\to \infty}n |u_n|=1$. Hence, $\lim_{n\to \infty}\frac{|u_n|}{\Re u_n}=1$, which implies at once that 
\begin{equation}\label{Eq:go-right-up}
\lim_{n\to \infty} \frac{\Im u_n}{\Re u_n}=0.
\end{equation}
Hence statement (1) follows.

Arguing by induction, we have
\begin{equation}\label{forma-wn}
\begin{split}
z_{n+1}&=z_0\lambda^n \prod_{j=0}^n \Big(1-\frac{u_j}{2}\Big)+\sum_{j=0}^n R_l^1(z_j,w_j)\prod_{k=j+1}^n \lambda \Big(1-\frac{u_k}{2}\Big), \\
w_{n+1}&=w_0 \overline{\lambda}^n \prod_{j=0}^n \Big(1-\frac{u_j}{2}\Big)+\sum_{j=0}^n R_l^2(z_j,w_j)\prod_{k=j+1}^n \overline{\lambda} \Big(1-\frac{u_k}{2}\Big),
\end{split}
\end{equation}
Therefore,
\begin{equation}\label{how-z-n}
 |z_{n+1}|\leq |z_0|  \prod_{j=0}^n \Big|1-\frac{u_j}{2}\Big|+\sum_{j=0}^n |R_l^1(z_j,w_j)|\prod_{k=j+1}^n \Big|1-\frac{u_k}{2}\Big|.
\end{equation}

Taking into account statement (1), we have
\begin{equation*}
\begin{split}
\lim_{j\to \infty}(-2j)\log \Big|1-\frac{u_j}{2}\Big|
&=\lim_{j\to \infty}(-2j)\left(\frac{1}{2}\log \Big|1-\frac{u_j}{2}\Big|^2\right)\\
&=\lim_{j\to \infty}(-2j)\left( \frac{1}{8}|u_j|^2-\frac{1}{2}\Re u_j\right)=1.
\end{split}
\end{equation*}
Therefore,
\begin{equation}\label{Eq:prod1a}
\prod_{j=0}^n \Big|1-\frac{u_j}{2}\Big|=\exp\left(\sum_{j=0}^n \log \Big|1-\frac{u_j}{2}\Big|\right)\sim \exp\left(\sum_{j=1}^n -\frac{1}{2j}\right)\sim \frac{1}{\sqrt{n}}.
\end{equation}
Moreover, since $(z_n,w_n)\in W(\beta)$ eventually, and $|R_l^1(z_j,w_j)|=O(\|(z_j,w_j)|^l)$,  it follows that there exist some constants $0<c\leq  C$ such that 
\[
|R_l^1(z_j,w_j)|\leq c |u_j|^{\beta l}\leq C j^{-\beta l}.
\]
Hence, by \eqref{Eq:prod1a} we have for $j>1$ sufficiently large
\begin{equation*}
\begin{split}
|R_l^1(z_j,w_j)|\prod_{k=j+1}^n \Big|1-\frac{u_k}{2}\Big|
&=|R_l^1(z_j,w_j)|\exp\left(\sum_{k=j+1}^n\log \Big|1-\frac{u_k}{2}\Big| \right)\\
&\sim |R_l^1(z_j,w_j)|\exp\left(-\frac{1}{2}\sum_{k=j+1}^n\frac{1}{k} \right)\\ 
&\sim |R_l^1(z_j,w_j)| \frac{\sqrt{j}}{\sqrt{n}}\leq C\frac{j^{\frac{1}{2}-\beta l}}{\sqrt{n}}.
\end{split}
\end{equation*}
Since $\beta l-\frac{1}{2}>1$, it follows that  there exists a constant (still denoted by) $C>0$ such that
\[
\sum_{j=0}^n |R_l^1(z_j,w_j)|\prod_{k=j+1}^n \Big|1-\frac{u_k}{2}\Big|\leq C \frac{1}{\sqrt{n}}.
\]
Hence, from \eqref{how-z-n}, there exists a constant $C>0$ such that
\begin{equation}\label{Eq-z-n-goes}
|z_n|\leq C \frac{1}{\sqrt{n}}.
\end{equation}
A similar argument for $w_n$, shows that 
\begin{equation}\label{Eq-w-n-goes}
|w_n|\leq C \frac{1}{\sqrt{n}}.
\end{equation}
By statement (1), it holds  $|z_n|\cdot |w_n|=|u_n| \sim \frac{1}{n}$. Since $|z_n|\leq C\frac{1}{\sqrt{n}}$  and $|w_n|\leq C\frac{1}{\sqrt{n}}$ by \eqref{Eq-z-n-goes} and \eqref{Eq-w-n-goes}, it follows that, in fact, $|z_n|\sim \frac{1}{\sqrt{n}}$ and $|w_n|\sim \frac{1}{\sqrt{n}}$, proving statement (2).

Finally, by statement (2), there exist  constants $c, C>0$ such that $|z_n|\leq C\frac{1}{\sqrt{n}}$ for all $n\in \N$ and $|u_n|\geq c\frac{1}{n}$. Fix $\gamma\in (0,1/2)$. Then for every  $n$ large enough
\[
|z_n|\leq C \frac{1}{\sqrt{n}}\leq \frac{C}{c^{1/2}}|u_n|^{1/2}<|u_n|^\gamma.
\]
Similarly, one can prove that $|w_n|<|u_n|^{\gamma}$. As a consequence, eventually $(z_n,w_n)$ is contained in $W(\gamma)$ for every $\gamma\in (0,1/2)$.
\end{proof}

\begin{remark}\label{uniform-convergence-onB}
From the uniform convergence of $\{F^n\}$ to $(0,0)$ in $B$, and from the proof of the previous lemma, it follows that (1) and (2) in Lemma \ref{go-good-down} are uniform in $B$. \end{remark}

We shall also need the following local result concerning the topology of $B$:

\begin{lemma}\label{Omega}
Let $F$ and $B$ be as in Theorem \ref{Thm:BZ}. Then $ B$ is a doubly connected domain ({\sl i.e.}, $B$ is connected and its fundamental group is $\Z$). 
\end{lemma}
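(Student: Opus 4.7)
The plan is to reduce the topological question to an evident one via the change of coordinates $\Phi(z,w)=(zw,w)$ suggested in the outline.

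First, I would verify that $\Phi$ restricts to a biholomorphism of $B$ onto
\[
\widetilde{B}:=\{(x,y)\in\C\times\C^{\ast} : x\in S(R_0,\theta_0),\ |x|^{1-\beta_0}<|y|<|x|^{\beta_0}\}.
\]
On $B$ the product $zw$ lies in $S(R_0,\theta_0)$, which does not contain the origin, so $w\ne 0$ throughout $B$ and $\Phi$ has the holomorphic inverse $(x,y)\mapsto (x/y,y)$. The inequalities defining $\widetilde{B}$ are obtained by rewriting $|w|<|zw|^{\beta_0}$ as $|y|<|x|^{\beta_0}$ and $|z|<|zw|^{\beta_0}$ as $|x|/|y|<|x|^{\beta_0}$, i.e.\ $|y|>|x|^{1-\beta_0}$.

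Second, I would observe that $|x|<1$ on $\widetilde{B}$: indeed $S(R_0,\theta_0)$ is contained in the open disc of radius $1/(2R_0)$ centered at $1/(2R_0)$, hence $|x|<1/R_0\le 1$ for $R_0$ large. Therefore $\log|x|<0$, the two exponents satisfy $|x|^{1-\beta_0}<|x|^{\beta_0}$, and for each $x$ the slice $\{y : |x|^{1-\beta_0}<|y|<|x|^{\beta_0}\}$ is a genuine open annulus around the origin. An explicit trivialization of the corresponding bundle $\widetilde{B}\to S(R_0,\theta_0)$ is
\[
(x,y)\longmapsto\left(x,\ \frac{\log|y|}{\log|x|},\ \frac{y}{|y|}\right)\in S(R_0,\theta_0)\times(\beta_0,1-\beta_0)\times S^{1},
\]
which is a homeomorphism onto its image; the sign of $\log|x|$ converts the two bounds on $|y|$ precisely into $\beta_0<\log|y|/\log|x|<1-\beta_0$.

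Third, $S(R_0,\theta_0)$ is convex, being the intersection of the convex disc $|\zeta-1/(2R_0)|<1/(2R_0)$ with the convex angular sector $|\arg\zeta|<\theta_0$, hence contractible. Together with the trivial contraction of the factor $(\beta_0,1-\beta_0)$, this yields a deformation retraction of $S(R_0,\theta_0)\times(\beta_0,1-\beta_0)\times S^{1}$ onto a single circle. Transporting back through $\Phi^{-1}$, the same holds for $B$, so $B$ is connected with $\pi_1(B)\cong\Z$, i.e.\ doubly connected.

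The argument is essentially bookkeeping rather than hard analysis; the only delicate point is to keep track of the sign of $\log|x|$ when inverting the exponents on $|y|$, which is what simultaneously ensures that the fiber annulus is non-degenerate and that the trivialization above is a well-defined homeomorphism.
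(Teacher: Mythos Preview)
Your proof is correct and follows essentially the same approach as the paper: both use the biholomorphism $\Phi(z,w)=(zw,w)$ to rewrite $B$ as the set $\{(x,y):x\in S(R_0,\theta_0),\ |x|^{1-\beta_0}<|y|<|x|^{\beta_0}\}$. The paper simply states that the thesis follows from this description, whereas you spell out the topological verification via an explicit trivialization onto $S(R_0,\theta_0)\times(\beta_0,1-\beta_0)\times S^1$; your extra care with the sign of $\log|x|$ is exactly what makes this work, and incidentally the bound $|x|<1$ also follows directly from $|z|\cdot|w|<|zw|^{2\beta_0}$ without invoking the size of $R_0$.
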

\begin{proof}
Let $\Phi:\C^2\to \C^2$ be defined by 
\begin{equation*}\label{Phi}
\Phi(z,w)=(zw,w).
\end{equation*}
The thesis then follows since $\Phi\colon B\to \Phi(B)$ is a biholomorphism and a straightforward computation shows that
\begin{equation*}\label{imagePhi}
\Phi( B)=\{(x,y)\in \C\times\C^*: x\in S(R, \theta), |x|^{1-{\beta_0}}<|y|<|x|^{\beta_0}\}.
\end{equation*}
\end{proof}

\section{Local Fatou coordinates on $B$}\label{local-coordi}

In this section we introduce special coordinates on $B$, which will be used later on in our construction. The first coordinate was introduced in \cite[Prop. 4.3]{BRZ}. Here we shall need more precise information, that is the following result:

\begin{proposition}\label{BRZ}
Let $F$ and $B$ be as in Theorem  \ref{Thm:BZ}. Then  there exists a holomorphic function $\psi:  B\to \C$ such that 
\begin{equation}\label{func-psi}
\psi \circ F = \psi +1.
\end{equation}
Moreover
\begin{equation}\label{psi-form}
\psi(z,w)=\frac{1}{zw}+c\log \frac{1}{zw}+v(z,w),
\end{equation}
where  $c\in \C$ depends only on $F_N$, and $v:B\to \C$ is a holomorphic  function such that for every $(z,w)\in B$,
\begin{equation}\label{v-gozero}
v(z,w)=zw\cdot g(z,w),
\end{equation}
for a bounded  holomorphic function $g:B\to \C$.
\end{proposition}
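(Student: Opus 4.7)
My plan is to construct $\psi$ as the sum of an explicit approximate Fatou coordinate plus a telescopic correction, exploiting the fact that on $B$ the resonant quantity $u:=zw$ obeys a one-dimensional parabolic recursion. First I would compute, from \eqref{automFp-local}, $|\lambda|=1$, and $(1-u/2)^2 = 1-u+u^2/4$, that
\begin{equation*}
u_{n+1} = u_n - u_n^2 + \tfrac14 u_n^3 + r(z_n,w_n),
\end{equation*}
with $|r(z,w)| = O(|u|^{\beta_0(l+1)}) = O(|u|^4)$ on $W(\beta_0)$. Passing to the reciprocal variable $U := 1/u$ this gives $U_{n+1} = U_n + 1 + \tfrac{3}{4U_n} + O(|U_n|^{-2})$. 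Since $B\subset W(\beta_0)\cap\pi^{-1}(S(R_0,\theta_0))$ and $S(R_0,\theta_0)$ avoids the negative real axis, the principal branch of $\log$ is holomorphic on $B$, and I would set
\begin{equation*}
\psi_0(z,w) := \frac{1}{zw} + c\log\frac{1}{zw}, \qquad c := -\tfrac34,
\end{equation*}
the value of $c$ being chosen precisely to cancel the $O(1/U_n)$ term. Using $\log(U_{n+1}/U_n) = 1/U_n + O(1/U_n^2)$ this would yield
\begin{equation*}
\psi_0\circ F - \psi_0 = 1 + \phi, \qquad \phi(z,w) = O(|zw|^2) \text{ on } B,
\end{equation*}
and $c$ would depend only on the third-order Taylor coefficients of $u\mapsto u(1-u/2)^2$, hence only on $F_N$.

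For the correction I would set
\begin{equation*}
v(z,w) := \sum_{n=0}^{\infty}\phi\bigl(F^n(z,w)\bigr).
\end{equation*}
Combining Lemma \ref{go-good-down}, Remark \ref{uniform-convergence-onB} and \eqref{eq:calim1}, one obtains $|U_n|\geq \Re U_n \geq A(|U(z,w)| + n)$ uniformly on $B$ for some $A>0$, so $|\phi\circ F^n(z,w)| \leq C(|U(z,w)|+n)^{-2}$. The series would then converge absolutely and uniformly on compact subsets of $B$, defining a holomorphic $v$ on $B$, and would satisfy
\begin{equation*}
|v(z,w)| \leq C\sum_{n\ge 0}\frac{1}{(|U(z,w)|+n)^2} \leq \frac{C'}{|U(z,w)|} = C'|zw|.
\end{equation*}
Hence $g:= v/(zw)$ would be a bounded holomorphic function on $B$ (well-defined since $zw\neq 0$ on $B$). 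A standard telescoping argument gives $v\circ F - v = -\phi$, whence $\psi := \psi_0 + v$ satisfies $\psi\circ F = \psi + 1$ and has the form \eqref{psi-form}--\eqref{v-gozero}.

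The hard part will be the uniform one-dimensional estimate $|U_n|\gtrsim|U(z,w)|+n$ valid for \emph{all} points in $B$: it is this uniformity, rather than merely pointwise or compact convergence, that upgrades the crude summability $|\phi\circ F^n|\leq C/n^2$ into the sharper bound $|v|\leq C|zw|$ needed to factor $v = zw\cdot g$ with $g$ bounded. To secure this uniformity, I would choose $R_0$ large enough that the inductive step in \eqref{eq:calim1} can be started at $n=0$ uniformly in $(z,w)\in B$, which is compatible with the choice of $R_0$ in Theorem \ref{Thm:BZ}.
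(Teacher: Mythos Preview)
Your proposal is correct and is essentially the same argument as the paper's: both define $\psi_0(z,w)=U+c\log U$ with $U=1/(zw)$, set $v=\sum_{n\ge 0}\phi\circ F^n$ where $\phi=\psi_0\circ F-\psi_0-1=O(|U|^{-2})$ on $B$ (this is exactly the telescoping series $\sum(\psi_{j+1}-\psi_j)$ in the paper), and bound $|v|\le C|zw|$ via the uniform estimate $\Re U_n\gtrsim |U_0|+n$ from \eqref{eq:calim1} together with $\Re U_0>K|U_0|$ on $H(R_0,\theta_0)$. Your explicit computation $c=-3/4$ and your remark that $R_0$ must be taken large enough so that \eqref{eq:calim1} holds from $n=0$ uniformly on $B$ make the paper's argument slightly more precise, but the strategy is identical.
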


\begin{proof}
The strategy of the proof follows the one for the existence of Fatou coordinates in the Leau-Fatou flower theorem. Given a point $(z,w)\in B$, for all $n\in \N$ we have
\[
U_{n+1}=U_n+1+\frac{c}{U_n}+O(|U_n|^{-2})
\]
where $c\in\C$ depends on $F_N$ and, as usual, $U_n:=\frac{1}{\pi(F^n(z,w))}$. The map $\psi$ is then obtained as the uniform limit in $B$ of the sequence of functions $\{\psi_m\}_{m\in \N}$, where $\psi_m\colon B\to \C$ is defined as
\begin{equation}\label{approxFatou}
\psi_m(z,w):=\frac{1}{\pi(F^m(z,w))} - m-c\log \pi(F^m(z,w)).
\end{equation}
In fact, a direct computation as in \cite[Prop. 4.3]{BRZ} implies that there exists $A>0$ such that for all $m\in \N$ and all $(z,w)\in B$, 
\begin{equation}\label{psi-meno-psi}
|\psi_{n+1}(z,w)-\psi_{n}(z,w)|\leq A|U_n|^{-2}.
\end{equation}
Therefore, since $|U_n|=1/|u_n|\sim n$ uniformly in $B$ by Lemma \ref{go-good-down}  and Remark \ref{uniform-convergence-onB}, the sequence $\sum_{j=0}^n(\psi_{j+1}-\psi_j)$ is uniformly converging in  $B$ to a bounded holomorphic function $v$, that is,
\[
v(z,w):=\sum_{j=0}^\infty (\psi_{j+1}(z,w)-\psi_j(z,w)).
\] 
Moreover, \eqref{psi-form} follows from $\psi_n-\psi_0=\sum_{j=0}^n (\psi_{j+1}-\psi_j)$, and $\psi_n \circ F=\psi_{n+1}+1$ yields the functional equation \eqref{func-psi}. Notice that \eqref{psi-meno-psi} implies $|\psi-\psi_m|\sim \frac{1}{m}$.

Finally, since $U_n\in H(R_0,\theta_0)$ for all $n\in \N$, there exists $K\in (0,1)$ such that $\Re U_0>K|U_0|$ for all $U_0\in H(R_0,\theta_0)$. Hence, by \eqref{eq:calim1},
\begin{equation*}
\begin{split}
|v(z,w)|
&
\leq A\sum_{j=0}^\infty \frac{1}{|U_j|^2}
\leq A\sum_{j=0}^\infty \frac{1}{(\Re U_j)^2}
\leq A\sum_{j=0}^\infty\frac{1}{(\Re U_0+\frac{j}{2})^2}\\&
\sim A\int_0^\infty\frac{dt}{(\Re U_0+\frac{t}{2})^2} =\frac{2A}{\Re U_0}\leq \frac{2A}{K |U_0|},
\end{split}
\end{equation*}
from which \eqref{v-gozero} follows at once.
\end{proof}

\begin{definition}
The map $\psi: B \to \C$ is called a {\sl Fatou coordinate} for $F$. 
\end{definition}

\begin{lemma}\label{Lem-psi-quasi-inj}
Let $F$  be as in Theorem  \ref{Thm:BZ}. Let $\psi$ be the Fatou coordinate for $F$ given by Proposition \ref{BRZ}. Then there exist $R_1\geq R_0$, $\beta_1\in (\beta_0, \frac{1}{2})$  and $0<\theta_1<\theta_0$ such that  the holomorphic map 
\[
B(\beta_1, \theta_1, R_1)\ni (z,w)\mapsto (\psi(z,w), w)
\] 
is injective.
\end{lemma}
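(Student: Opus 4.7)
My plan is to pass to the coordinate $\xi=1/(zw)$ on slices $\{w=\mathrm{const}\}$, where $\psi$ becomes a small perturbation of the identity. By \eqref{psi-form}--\eqref{v-gozero}, setting $\xi=1/(zw)$ yields $\psi(z,w)=\mu(\xi)+\tilde v(\xi,w)$ with $\mu(\xi):=\xi+c\log\xi$ and $\tilde v(\xi,w):=v(1/(\xi w),w)=\tilde g(\xi,w)/\xi$ for a bounded holomorphic $\tilde g$; moreover the $w$-slice of $B(\beta,\theta,R)$ becomes
\[
\{\xi\in H(R,\theta):\ |w|^{-1/(1-\beta)}<|\xi|<|w|^{-1/\beta}\}.
\]

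The first step is to observe that on the convex set $H(R_1,\theta_1)$ with $R_1>|c|$, the unperturbed map $\mu$ is injective. Indeed, if $\mu(\xi_1)=\mu(\xi_2)$ with $\xi_1\neq\xi_2$, convexity places the segment $\xi(t)=(1-t)\xi_1+t\xi_2$ in $H(R_1,\theta_1)$, and integrating $\mu'$ along it gives $1=-c\int_0^1\xi(t)^{-1}\,dt$, contradicting $|c|/R_1<1$.

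The second step is to absorb the perturbation $\tilde v$. I would choose $\beta_1\in(\beta_0,1/2)$, $\theta_1\in(0,\theta_0)$, and $R_1\geq R_0$ large, and argue by contradiction: assume $\psi(z_1,w)=\psi(z_2,w)$ with $(z_i,w)\in B(\beta_1,\theta_1,R_1)$ and $z_1\neq z_2$, and set $\xi_i=1/(z_iw)$. The key geometric observation is that, since $|w|\leq R_1^{-\beta_1}$ is small on $B(\beta_1,\theta_1,R_1)$ and since $\beta_1>\beta_0$, $\theta_1<\theta_0$, there is a multiplicative gap between the $\xi$-slice of $B(\beta_1,\theta_1,R_1)$ and that of $B(\beta_0,\theta_0,R_0)$: both the straight segment $[\xi_1,\xi_2]$ and a tube of multiplicative radius $\delta|\xi|$ around it lie inside the larger slice. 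Cauchy estimates applied to $\tilde g$ on such tubes yield $|\partial_\xi\tilde g|\leq C/|\xi|$, whence $|\partial_\xi\tilde v|=\bigl|-\tilde g/\xi^2+(\partial_\xi\tilde g)/\xi\bigr|\leq C'/|\xi|^2\leq C'/R_1^2$ along the segment.

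Integrating $\mu+\tilde v$ along $[\xi_1,\xi_2]$ and combining with the first step then gives
\[
0=\bigl|\mu(\xi_1)-\mu(\xi_2)+\tilde v(\xi_1,w)-\tilde v(\xi_2,w)\bigr|\geq |\xi_1-\xi_2|\Bigl(1-\frac{|c|}{R_1}-\frac{C'}{R_1^2}\Bigr),
\]
which is strictly positive for $R_1$ large, forcing $\xi_1=\xi_2$ and hence $z_1=z_2$. The main obstacle is the geometric verification that the segment and its Cauchy tubes remain inside the larger $\xi$-slice with uniform margin: this is precisely what forces the strict inequalities $\beta_1>\beta_0$, $\theta_1<\theta_0$, and $R_1$ sufficiently large.
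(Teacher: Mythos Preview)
Your approach is essentially the same as the paper's: pass to the variable $\xi=1/(zw)$ (the paper writes $U$), use Cauchy estimates on the bounded function $\tilde g$ to control $\partial_\xi\tilde v$, and integrate $\psi'=1+c/\xi+\partial_\xi\tilde v$ along the segment $[\xi_1,\xi_2]$ to obtain a lower bound of the form $(1-|c|/R_1-\mathrm{const}/R_1)|\xi_1-\xi_2|$. The only cosmetic differences are that the paper uses Cauchy discs of \emph{fixed} radius $\delta_0$ (yielding $|\partial_U v|\le C_2/R$) rather than your multiplicative radius $\delta|\xi|$ (yielding $C'/R_1^2$), and the paper introduces an explicit intermediate exponent $\tilde\beta\in(\beta_0,\beta_1)$ to separate the Cauchy-estimate step from the segment-containment step; your version folds these together. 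The ``main obstacle'' you flag---showing the segment stays in the larger slice---is exactly what the paper verifies in detail, using $\Re U>K|U|$ on the sector to control the lower annular bound along the segment.
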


\begin{proof} 
First we search for $\beta_1$, $\theta_1$ and $R_1$ so that on $B(\beta_1, \theta_1, R_1)$ we have good estimates for the partial derivatives of $g$ and $v$ with respect to $U$.


Since the map $\chi\colon B\ni (z,w)\mapsto (U, w)$ is univalent, we can consider $v$ as a function of $(U,w)$ defined on 
\[
\chi(B)=\{(U,w): U\in H(R_0,\theta_0), |U|^{\beta_0-1}<|w|<|U|^{-\beta_0}\}.
\] 

Denote by $(H(R_0,\theta_0)+1)$ the set of points $U=V+1$ with $V\in H(R_0, \theta_0)$. Let $\theta_1\in (0,\theta_0)$ be such that $H(R_0+1,\theta_1)\subset (H(R_0, \theta_0)+1)$.
There exists $\delta_0>0$ such that for every $U\in (H(R_0,\theta_0)+1)$ the distance of $U$ from $\partial H(R_0, \theta_0)$ is greater than $2\delta_0$. 

Let $\tilde\beta\in (\beta_0,\frac{1}{2})$. For $R\geq R_0$, we have 
\[
\chi(B(\tilde\beta, \theta_1, R))
=
\{(U,w): U\in H(R,\theta_1), |U|^{\tilde\beta-1}<|w|<|U|^{-\tilde\beta}\},
\]
and
there exists $\tilde R\geq R_0$ such that for all $(U,w)\in \chi(B(\tilde\beta, \theta_1, \tilde R))$ and all $t\in\R$ it holds
\[
|U+\delta_0e^{it}|^{\beta_0-1}\le(|U|-\delta_0)^{\beta_0-1}<|U|^{\tilde\beta-1}<|w|<|U|^{-\tilde\beta}<(|U|+\delta_0)^{-\beta_0}\le|U+\delta_0e^{it}|^{-\beta_0},
\]
which implies that $(U+\delta_0 e^{it}, w)\in \chi(B)$ for all $t\in \R$ and all $(U,w)\in \chi(B(\tilde\beta, \theta_1, \tilde R))$, since $U+\delta_0e^{it}\in H(R_0, \theta_0)$ for all $t\in \R$. 
Therefore, for every $(U_0, w_0)\in \chi(B(\tilde\beta, \theta_1, \tilde R))$, the Cauchy formula for derivatives yields
\[
\left|\frac{\partial g}{\partial U}(U_0,w_0)  \right|=\frac{1}{2\pi}\left|\int_{|\zeta-U_0|=\delta_0}\frac{g(\zeta, w_0)}{(\zeta-U_0)^2}d\zeta\right|
\leq \frac{1}{2\pi\delta_0}\sup_{(U,w_0)\in \chi(B)}|g(U,w_0)|\leq \frac{C}{2\pi \delta_0}=:C_1.
\]
Hence, setting $C_2:=C+C_1$, for every $R\geq \min\{\tilde R, 1\}$, we have 
\begin{equation}\label{fuuuf}
\left| \frac{\partial v}{\partial U}(U_0,w_0)\right|
\leq \frac{C}{|U_0|^2}+\frac{1}{|U_0|}\left|\frac{\partial g}{\partial U}(U_0,w_0)\right|
\leq \frac{C}{|U_0|^2}+\frac{C_1}{|U_0|}
\leq \frac{C_2}{R}
\end{equation}
for all $(U_0, w_0)\in \chi(B(\tilde\beta, \theta_1, R))$.
Now, since there exists $K\in (0,1)$ such that $\Re U> K |U|$ 
for every $U\in H(\theta_1, R)$, we fix $\beta_1\in (\tilde\beta,\frac{1}{2})$ and let $R\geq \tilde R$ be such that 
\begin{equation}\label{t-subj}
K^{1-\beta_1}r^{\beta_1-1}>r^{\tilde\beta-1} \quad \forall r\geq R.
\end{equation}

To prove the injectivity on $B(\beta_1, \theta_1, R_1)$, we first prove that for each $(U_1, w_0), (U_2, w_0)\in \chi(B(\beta_1, \theta_1,  R))$ we have $(\gamma(t), w_0)\in \chi(B(\tilde\beta, \theta_1,  R))$ where $\gamma(t)= tU_1+(1-t)U_2$ with $t\in [0,1]$ is the real segment joining $U_1$ and $U_2$. In fact, we have $\gamma(t)\subset H(\theta_1, R)$ for all $t\in[0,1]$ since $H(\theta_1, R)$ is convex. Moreover, since $|U_j|>|w_0|^{\frac{1}{\beta_1-1}}$ and $\Re U_j>K|U_j|$ for $j=1,2$, we have
\begin{equation*}
|t U_1+(1-t)U_2|
>t\Re U_1+(1-t)\Re U_2
>K\left(t|w_0|^{\frac{1}{\beta_1-1}}+(1-t)|w_0|^{\frac{1}{\beta_1-1}}\right)
=K|w_0|^{\frac{1}{\beta_1-1}},
\end{equation*}
for all $t\in [0,1]$, and so, by \eqref{t-subj},
\[
|w_0|>\left(\frac{1}{K} \right)^{\beta_1-1}|t U_1+(1-t)U_2|^{\beta_1-1}>|t U_1+(1-t)U_2|^{\tilde\beta-1}.
\]
On the other hand, since $|U_j|<|w_0|^{-\frac{1}{\beta_1}}$, $j=1,2$, for all $t\in [0,1]$ we have,
\[
|tU_1+(1-t)U_2|
<t|w_0|^{-\frac{1}{\beta_1}}+(1-t)|w_0|^{-\frac{1}{\beta_1}}
=|w_0|^{-\frac{1}{\beta_1}},
\]
hence, 
\[
|tU_1+(1-t)U_2|^{-\tilde\beta}>|tU_1+(1-t)U_2|^{-\beta_1}>|w_0|.
\]
Therefore using \eqref{fuuuf} we obtain 
\begin{equation*}
\begin{split}
|\psi(U_1,w_0)-\psi(U_2,w_0)|
&=\left|\int_\gamma\frac{\partial \psi}{\partial U}(U,w_0)dU  \right|=\left|\int_\gamma \left[1+\frac{c}{U}+\frac{\partial v}{\partial U}(U,w_0)\right]dU  \right|\\
&\geq |U_1-U_0|-\frac{|c|}{R}|U_1-U_0|-\frac{C_2}{R}|U_1-U_0|\\
&=\left(1-\frac{|c|}{R}-\frac{C_2}{R}\right)|U_1-U_0|,
\end{split}
\end{equation*}
and we obtain the injectivity of $(U,w)\mapsto (\psi(U,w),w)$ on $\chi(B(\beta_1, \theta_1,  R))$, and hence of $(z,w)\mapsto (\psi(z,w), w)$ on $B(\beta_1,\theta_1, R)$, for $R$ sufficiently large.
\end{proof}

The next result shows the existence of another ``coordinate'' on $B$ defined using the Fatou coordinate. 

\begin{proposition}\label{Prop:second-local-coord}
Let $F$ and $B$ be as in Theorem \ref{Thm:BZ} and $\psi$ the Fatou coordinate given by Proposition \ref{BRZ}. Then there exists a holomorphic function $\sigma\colon  B\to \C^\ast$ such that 
\begin{equation}\label{func-sigma}
\sigma \circ F=\overline{\lambda}e^{-\frac{1}{2\psi}}\sigma.
\end{equation}
Moreover, $\sigma(z,w) = w + \eta(z,w)$,
where $\eta\colon B\to \C$ is a holomorphic  function such that for every $(z,w)\in B$ 
\begin{equation}\label{eta-gozero}
\eta(z,w)=(zw)^\alpha \cdot h(z,w),
\end{equation}
for a holomorphic bounded function  $h\colon B\to \C$, with $\alpha\in (1-\beta_0,1)\subset (1/2,1)$.
\end{proposition}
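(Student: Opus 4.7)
The plan is to construct $\sigma$ as the uniform limit on $B$ of the sequence
\[
\sigma_n(z,w):=\lambda^n\,\pi_2(F^n(z,w))\,\exp\!\left(\frac{1}{2}\sum_{j=0}^{n-1}\frac{1}{\psi(z,w)+j}\right),
\]
which is the natural candidate obtained by formally iterating the desired identity \eqref{func-sigma} and using $\psi\circ F^j=\psi+j$. Note that $\sigma_0(z,w)=w$. A direct manipulation using \eqref{func-psi} and an index shift in the sum gives
\[
\sigma_n\circ F\;=\;\overline{\lambda}\,e^{-\frac{1}{2\psi}}\,\sigma_{n+1},
\]
so once uniform convergence $\sigma_n\to\sigma$ on $B$ is established, the functional equation \eqref{func-sigma} follows immediately by letting $n\to\infty$.

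For the convergence I would pass to the telescoping product
\[
\sigma_n(z,w)\;=\;w\prod_{k=0}^{n-1}\frac{\lambda\,w_{k+1}}{w_k}\exp\!\left(\frac{1}{2(\psi(z,w)+k)}\right).
\]
Using \eqref{automFp-local} together with $|\lambda|=1$ gives $\lambda w_{k+1}/w_k=1-\tfrac{u_k}{2}+\lambda R_l^2(z_k,w_k)/w_k$, while Proposition~\ref{BRZ} and $\psi\circ F^k=\psi+k$ yield $\psi(z,w)+k=U_k+c\log U_k+u_k g(F^k(z,w))$, whence $\exp(1/(2(\psi+k)))=1+\tfrac{u_k}{2}+O(|u_k|^2\log|U_k|)$. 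The two linear terms in $u_k$ cancel, so that each logarithmic factor of the product satisfies
\[
\left|\log\frac{\sigma_{k+1}(z,w)}{\sigma_k(z,w)}\right|\;=\;O\bigl(|u_k|^2\log|U_k|\bigr)+O\!\left(\frac{|R_l^2(z_k,w_k)|}{|w_k|}\right).
\]
On $B$ the estimates of Lemma~\ref{go-good-down} hold uniformly (Remark~\ref{uniform-convergence-onB}): $|u_k|\sim(|U_0|+k)^{-1}$, $|w_k|\ge c|u_k|^{1-\beta_0}$, and $|R_l^2(z_k,w_k)|\le C|u_k|^{\beta_0 l}$, so the second remainder is $O(|u_k|^{\beta_0(l+1)-1})=O(|u_k|^3)$ because $\beta_0(l+1)\geq 4$. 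Summing over $k\ge 0$ yields
\[
\sum_{k=0}^{\infty}\left|\log\tfrac{\sigma_{k+1}}{\sigma_k}\right|\;\le\; C\,|u_0|\log(1/|u_0|),
\]
which, for $R_0$ large, is dominated by $|u_0|^\tau$ for every $\tau\in(0,1)$. Hence the product converges uniformly on $B$, so $\sigma_n\to\sigma=w\cdot\Pi$ with $\Pi$ a nonvanishing holomorphic exponential satisfying $|\Pi-1|\le C|u_0|^{\tau}$; in particular $\sigma$ is holomorphic and $\C^\ast$-valued on $B$.

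Finally, $\eta:=\sigma-w=w(\Pi-1)$ obeys $|\eta(z,w)|\le C|w|\,|u_0|^\tau\le C|zw|^{\tau+\beta_0}$ on $B$, thanks to the inequality $|w|\le|zw|^{\beta_0}$. Since $\beta_0<\tfrac12$, I can choose $\tau\in(1-2\beta_0,\,1-\beta_0)$ and set $\alpha:=\tau+\beta_0\in(1-\beta_0,1)$, obtaining $|\eta|\le C|zw|^\alpha$. Because $\pi(B)\subset S(R_0,\theta_0)$ lies in a sector avoiding the non-positive real axis, a single-valued branch of $(zw)^\alpha$ is well-defined on $B$, and $h(z,w):=\eta(z,w)/(zw)^\alpha$ is the required bounded holomorphic function. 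The delicate step is the bookkeeping that produces the exact cancellation of the two linear terms in $u_k$ between $\lambda w_{k+1}/w_k$ and $\exp(1/(2(\psi+k)))$: this is precisely what dictates the shape of the multiplier $\overline{\lambda}\,e^{-1/(2\psi)}$ in \eqref{func-sigma}, and it is what ensures that the residual contributions of order $|u_k|^2\log|U_k|$ and $R_l^2/w_k$ are summable under the assumption $\beta_0(l+1)\geq 4$.
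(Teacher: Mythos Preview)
Your argument is correct and follows the same overall strategy as the paper: both define the same approximating sequence $\sigma_n$ and verify the functional equation by the same index--shift computation. The difference is in how convergence and the bound on $\eta$ are obtained. The paper works additively, first proving $\{\sigma_n\}$ is equibounded and then estimating $|\sigma_{n+1}-\sigma_n|\le C_n|u_n|^\alpha$ directly, summing the telescoping differences; it then appeals to Hurwitz and an explicit check at the diagonal point $(r,r)$ to conclude $\sigma\not\equiv 0$. You instead telescope multiplicatively, controlling $\bigl|\log(\sigma_{k+1}/\sigma_k)\bigr|$ via the cancellation of the linear $u_k$--terms and summing to get $\sigma=w\cdot\Pi$ with $\Pi$ an explicit nonvanishing exponential. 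Your route has the pleasant feature that $\sigma\colon B\to\C^\ast$ is automatic (since $w\ne 0$ on $B$), avoiding the separate nonvanishing argument; it also bypasses the intermediate equiboundedness step. One cosmetic point: the phrase ``for $R_0$ large'' is unnecessary, since on the given $B$ one has $|u_0|\log(1/|u_0|)\le M\,|u_0|^\tau$ with $M=\sup_{0<x<1/R_0}x^{1-\tau}\log(1/x)<\infty$, so the estimate holds with a constant depending on $R_0$ and $\tau$ without enlarging $R_0$.
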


\begin{proof}
For $n\in \N$, consider the holomorphic function $\sigma_n \colon  B \to \C^\ast$ defined by
\begin{equation}\label{sigma-n}
\sigma_n(z,w):= \lambda^n \pi_2(F^n(z,w)) \exp\left({\frac{1}{2}\sum_{j=0}^{n-1} \frac{1}{\psi(z,w)+j}}\right).
\end{equation}
We will prove that the sequence $\{\sigma_n\}$ converges uniformly in $B$ to a holomorphic function $\sigma\colon  B\to \C^\ast$ satisfying the assertions of the statement.

First, if $\{\sigma_n\}$ is uniformly convergent on compacta of $ B$, then \eqref{func-sigma} follows from
\begin{equation*}
\begin{split}
\sigma_n \circ F
&=\lambda^n w_{n+1} \exp\left(\frac{1}{2}\sum_{j=0}^{n-1}\frac{1}{\psi\circ F+j}  \right)
=\lambda^n w_{n+1} \exp\left(\frac{1}{2}\sum_{j=0}^{n-1}\frac{1}{\psi+j+1}  \right)
\\
&=\overline{\lambda}\exp\left(-\frac{1}{2\psi} \right)\lambda^{n+1}w_{n+1}\exp\left(\frac{1}{2}\sum_{j=0}^{n}\frac{1}{\psi+j}  \right)=\overline{\lambda}\exp\left(-\frac{1}{2\psi} \right)\sigma_{n+1}.
\end{split}
\end{equation*}

Now we show that $\{\sigma_n\}$ is equibounded in $B$. By Proposition \ref{BRZ} we have 
$$
\left|\psi-\frac{1}{u_j}+j+c\log u_j\right|= |\psi-\psi_j|\sim \frac{1}{j}.
$$
By Lemma~\ref{go-good-down} and Remark \ref{uniform-convergence-onB}, $|u_j|\sim \frac{1}{j}$ uniformly in $B$, hence,   
\begin{equation}\label{eq:psi-goes-j}
\frac{1}{\psi+j}= \frac{u_j}{1-cu_j\log u_j +O(u_j)}=u_j+O(u_j^2\log u_j).
\end{equation}
Now, by statement (1) in Lemma \ref{go-good-down}, we have that 
$\lim_{j\to \infty}\frac{1}{2} j\Re u_j
=\frac{1}{2}$.
Therefore,
\[
\exp\left(\frac{1}{2}\sum_{j=0}^{n-1} \Re u_j\right)\sim \exp\left(\sum_{j=1}^{n-1} \frac{1}{2j}\right)=O(n^{1/2}).
\]
Moreover, again thanks to Lemma~\ref{go-good-down} and Remark \ref{uniform-convergence-onB}, there exists $C>0$ such that  $\sum_{j=0}^\infty |u_j^2\log u_j|\leq C$. Hence, there exists $C'>0$ such that
\begin{equation}\label{Eq:zero}
\begin{split}
\left|\exp\left(\frac{1}{2}{\sum_{j=0}^{n-1} \frac{1}{\psi(z,w)+j}}\right)\right|
&=\left| \exp\left({\sum_{j=0}^{n-1} \left( \frac{u_j}{2}+O(u_j^2\log u_j)\right)}\right)  \right|\\
&\leq  C'\exp\left(\sum_{j=1}^{n-1} \frac{1}{2j}\right)=O(n^{1/2}).
\end{split}
\end{equation}
Therefore, since $|w_n|\sim n^{-1/2}$, 
we have
\begin{equation}\label{Eq:1a}
|\sigma_n(z,w)|
=
|w_n| \left|\exp\left(\frac{1}{2}{\sum_{j=0}^{n-1} \frac{1}{\psi(z,w)+j}}\right)\right|=|w_n|O(n^{1/2})=O(1),
\end{equation}
showing that the sequence $\{\sigma_n\}$ is equibounded on $B$. 

To prove that $\{\sigma_n\}$ is in fact convergent, let us first notice that we have 
\begin{equation*}
\begin{aligned}
\sigma_{n+1}(z,w)
&= \lambda^{n+1} w_{n+1} \exp\left({\frac{1}{2}\sum_{j=0}^{n} \frac{1}{\psi(z,w)+j}}\right)\\
&= \lambda^{n+1} \left[\bar\lambda w_{n}\left(1-\frac{u_n}{2}\right) + R^2_l(z_n, w_n)\right] \exp\left({\frac{1}{2}\sum_{j=0}^{n} \frac{1}{\psi(z,w)+j}}\right)\\
&=\sigma_n(z, w) \left(1-\frac{u_n}{2}\right)  e^{\frac{1}{2(\psi(z,w)+n)}} + \lambda^{n+1} R^2_l(z_n, w_n)\exp\left(\frac{1}{2}{\sum_{j=0}^{n} \frac{1}{\psi(z,w)+j}}\right).
\end{aligned} 
\end{equation*}
Therefore,
\begin{equation}\label{Eq:diff-sigman}
\begin{split}
\sigma_{n+1}(z,w) - \sigma_{n}(z,w)
&=
\sigma_n(z, w) \left[\left(1-\frac{u_n}{2}\right)  e^{\frac{1}{2(\psi(z,w)+n)}} -1\right] \\
&\quad+ \lambda^{n+1} R^2_l(z_n, w_n)\exp\left({\frac{1}{2}\sum_{j=0}^{n} \frac{1}{\psi(z,w)+j}}\right).
\end{split}
\end{equation}

Now we estimate the terms in the right hand side of \eqref{Eq:diff-sigman}. Fix $\alpha \in (1-\beta_0,1)$. Note that $\alpha>\frac{1}{2}$. By \eqref{eq:psi-goes-j}, recalling that $|u_n|\sim \frac{1}{n}$, we have
\begin{equation}\label{Eq:1b}
\begin{split}
\left(1-\frac{u_j}{2}\right)  e^{\frac{1}{2(\psi(z,w)+n)}} -1
&=\left(1-\frac{u_j}{2}\right) e^{\frac{1}{2}u_n+O(u_n^2\log u_n)}-1\\
&=\left(1-\frac{u_j}{2}\right)\left(1+\frac{1}{2} u_n +O(u_n^2\log u_n)\right)-1\\
&=
O(u_n^2\log u_n)
=|u_n|^\alpha O\left(\frac{\log n}{n^{2-\alpha}}\right).
\end{split}
\end{equation}
Next, since $(z_n,w_n)\in B$, we have that $|R^2_l(z_n,w_n)|=O(|u_n|^{\beta_0 l})$, and by \eqref{Eq:zero}, we have
\begin{equation}\label{Eq:1c}
|R_l^2(z_n,w_n)|  \left|\exp\left({\frac{1}{2}\sum_{j=0}^{n} \frac{1}{\psi(z,w)+j}}\right)\right|\leq C |u_n|^\alpha n^{\frac{1}{2}+\alpha-\beta_0 l},
\end{equation}
for some $C>0$.

From \eqref{Eq:diff-sigman}, using \eqref{Eq:1a}, \eqref{Eq:1b}, \eqref{Eq:1c}, it follows that there exists a constant $C'>0$ such that for all $(z,w)\in B$,
\begin{equation}\label{Eq:sigma-good}
|\sigma_{n+1}(z,w) - \sigma_{n}(z,w)|
\le C_n|u_n|^\alpha,
\end{equation}
with $C_n=C' \left(\frac{\log n}{n^{2-\alpha}}+n^{\frac{1}{2}+\alpha-\beta_0 l}\right)$. Therefore the sequence $\{\sigma_n\}$ converges uniformly on $B$ to a holomorphic function $\sigma$.  
Let $C:=\sum_{n=0}^\infty C_n<+\infty$.  
For all $n\in\N$, we have $|u_n|\leq 1/R_0$, 
hence \eqref{Eq:sigma-good} implies that  $\sigma_n-\sigma_0=\sum_{j=0}^n (\sigma_{j+1}-\sigma_j)$ converges uniformly on  $B$ to a holomorphic function $\eta$ such that $\eta(z,w)= \sigma(z,w)-\sigma_0(z,w)=\sigma(z,w)-w$. 

Moreover, for all $(z,w)\in B$ we have
\[
|\eta(z,w)|\leq \sum_{n=0}^\infty |\sigma_{n+1}(z,w)-\sigma_n(z,w)|\leq \sum_{n=0}^\infty C_n |u_n|^\alpha<|u_0|^\alpha\sum_{n=0}^\infty C_n=C |zw|^\alpha.
\]
Finally, since $\sigma_n(z,w)\neq 0$ for all $n\in \N$ and $(z,w)\in B$, it follows that either $\sigma\equiv 0$ or $\sigma(z,w)\neq 0$ for all $(z,w)\in B$. Since $(r,r)\in B$ for all $r>0$ sufficiently small, recalling that we just proved that $\sigma(z,w)=w+(zw)^{\alpha} h(z,w)$, with $|h|\leq C$ for all $(z,w)\in B$, and $2\alpha>1$, we have
\[
|\sigma(r,r)|=|r+r^{2\alpha}h(r,r)|\geq r-r^{2\alpha}C=r(1-o(r)),
\]
proving that $\sigma\not\equiv 0$.
\end{proof}

We shall now prove that the map $B\ni (z,w)\mapsto (\psi(z,w), \sigma(z,w))$ is injective on a suitable subset of $B$. Such a result is crucial to show that the global basin of attraction which we shall introduce in the next section, is biholomorphic to $\C\times \C^\ast$.

\begin{proposition}\label{local-inj}
Let $F$ and $B$ be as in Theorem \ref{Thm:BZ}, let $\psi:B\to \C$ be the Fatou coordinate given by Proposition \ref{BRZ} and let $\sigma: B\to \C$ be the second local coordinate defined in Proposition \ref{Prop:second-local-coord}. Then there exist $R_1\geq R_0$, $\beta_1\in (\beta_0, \frac{1}{2})$  and $\theta_1\in(0,\theta_0]$  such that  the holomorphic map 
\[
B(\beta_1, \theta_1, R_1)\ni (z,w)\mapsto Q(z,w):=(\psi(z,w), \sigma(z,w))
\] 
is injective. 

Moreover, there exist  $\tilde R>1$, $\tilde \theta\in (0,\frac{\pi}{2})$ and $\tilde \beta\in (0,\frac{1}{2})$ such that 
\begin{equation}\label{inside-claim}
\left\{(U,w)\in\C^2: U\in H(\tilde R, \tilde\theta), |U|^{\tilde\beta-1}<|w|<|U|^{-\tilde\beta}\right\}
\subset Q(B).
\end{equation}
\end{proposition}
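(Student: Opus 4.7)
The plan is to work in the coordinates $(U,w) = \chi(z,w) := (1/(zw),w)$, in which $\chi\colon B \to \chi(B)$ is a biholomorphism and Propositions \ref{BRZ}, \ref{Prop:second-local-coord} give
\[
\psi(U,w) = U + c\log U + v(U,w), \qquad \sigma(U,w) = w + \eta(U,w),
\]
with $|v(U,w)|\le C/|U|$ and $|\eta(U,w)|\le C|U|^{-\alpha}$, where $\alpha > 1/2$. Hence $\tilde Q := Q\circ\chi^{-1}$ is a perturbation of the ``model'' map $(U,w)\mapsto(U+c\log U,\,w)$. The strategy is to produce Cauchy estimates for the partial derivatives of $v$ and $\eta$ on polydiscs contained in $\chi(B)$, exactly as in Lemma \ref{Lem-psi-quasi-inj}, and then to deduce both the injectivity of $\tilde Q$ on a shrunken subdomain $\chi(B(\beta_1,\theta_1,R_1))$ by path integration, and the surjectivity statement \eqref{inside-claim} by a contraction-mapping argument.

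For injectivity, fix $\tilde\beta\in(\beta_0,1/2)$ and an angle $\theta_1<\theta_0$ as in Lemma \ref{Lem-psi-quasi-inj}, and then choose $\beta_1\in(\tilde\beta,1/2)$ and $R_1\ge R_0$ large so that polydiscs $\{|U-U_0|<\delta_0\}\times\{|w-w_0|<\delta_1|w_0|\}$ lie in $\chi(B(\tilde\beta,\theta_1,\tilde R))\subset\chi(B)$ for every $(U_0,w_0)\in\chi(B(\beta_1,\theta_1,R_1))$. Applying Cauchy's formula to $v$ and $\eta$ yields
\[
\Big|\tfrac{\partial v}{\partial U}\Big|,\ \Big|\tfrac{\partial v}{\partial w}\Big|\ =\ O(|U|^{-2}),\qquad \Big|\tfrac{\partial \eta}{\partial U}\Big|,\ \Big|\tfrac{\partial \eta}{\partial w}\Big|\ =\ O(|U|^{-\alpha}),
\]
on the shrunken domain. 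Given $(U_1,w_1),(U_2,w_2)\in\chi(B(\beta_1,\theta_1,R_1))$ with $\tilde Q(U_1,w_1)=\tilde Q(U_2,w_2)$, the same convexity-of-slices argument used in Lemma \ref{Lem-psi-quasi-inj} shows that the real segment joining them remains in the Cauchy-enlarged domain. Integrating $d\tilde Q$ along this segment gives a linear system
\[
0=(U_1-U_2)\bigl(1+O(R_1^{-1})\bigr)+(w_1-w_2)\,O(R_1^{-1}),\qquad 0=(w_1-w_2)\bigl(1+O(R_1^{-\alpha})\bigr)+(U_1-U_2)\,O(R_1^{-\alpha}),
\]
whose matrix is invertible for $R_1$ large, forcing $(U_1,w_1)=(U_2,w_2)$ and hence the injectivity of $Q$ on $B(\beta_1,\theta_1,R_1)$.

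For the image claim, given $(a,b)$ with $a\in H(\tilde R,\tilde\theta)$ and $|a|^{\tilde\beta-1}<|b|<|a|^{-\tilde\beta}$ for constants $\tilde R,\tilde\theta,\tilde\beta$ to be chosen, I would solve $\tilde Q(U,w)=(a,b)$ as the fixed-point equation
\[
T(U,w)\ :=\ \bigl(a-c\log U-v(U,w),\ b-\eta(U,w)\bigr)\ =\ (U,w).
\]
With initial guess $U_\ast := a-c\log a$ and $w_\ast := b$, the derivative bounds above imply that $T$ is a contraction on a small bidisc around $(U_\ast,w_\ast)$ of radii of order $1$ in $U$ and of order $|a|^{-\alpha}$ in $w$, provided $\tilde R$ is large. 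For $\tilde R$ sufficiently large and $\tilde\beta$ chosen strictly larger than $\beta_1$, this bidisc is contained in $\chi(B(\beta_1,\theta_1,R_1))$, so the unique fixed point $(U,w)$ lies in $B$ and satisfies $Q(\chi^{-1}(U,w))=(a,b)$, giving \eqref{inside-claim}.

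The main obstacle is calibrating the constants $\tilde\beta,\beta_1,\theta_1,R_1,\tilde R,\tilde\theta$ simultaneously: the Cauchy polydiscs must fit inside $\chi(B)$, which forces $\beta_1$ to be strictly larger than $\tilde\beta$ and $\theta_1$ strictly smaller than $\theta_0$ (as in the proof of Lemma \ref{Lem-psi-quasi-inj}); the straight segments used for the injectivity integral must remain in the enlarged domain, which is exactly the role of the convexity lemma; and the contraction polydiscs used for the surjectivity step must map back into the source region $\chi(B(\beta_1,\theta_1,R_1))$, which in turn requires $\tilde\beta$ to be chosen slightly larger than $\beta_1$. A secondary subtlety is that the $w$-Cauchy estimate on $\eta$ is constrained by the two-sided bound $|U|^{\beta_0-1}<|w|<|U|^{-\beta_0}$, so the admissible $w$-polydisc radius is of order $|w|$ times a small constant, which is what determines the final value of $\delta_1$ above.
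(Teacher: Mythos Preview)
Your overall strategy is sound, and for the image claim \eqref{inside-claim} your contraction-mapping argument is a legitimate alternative to the paper's proof. The paper instead runs a two-step Rouch\'e argument: first in the $U$-variable (comparing $\psi_{\xi_0}(U)$ with $U+c\log U$) to obtain \eqref{fiber-pi2}, and then, after composing with the injective map $K(U,w)=(\psi(U,w),w)$, in the $w$-variable (comparing $\tilde\sigma(\zeta_0,w)$ with $w$). Both approaches rest on the same sup bounds for $v$ and $\eta$, and yours is arguably more direct.

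For the injectivity, however, your argument differs from the paper's and contains a genuine gap. The paper does \emph{not} integrate $d\tilde Q$ along segments in both variables. Instead it observes that, by Lemma~\ref{Lem-psi-quasi-inj}, each approximant $(z,w)\mapsto(\psi(z,w),\sigma_n(z,w))$ is injective on $B(\beta_1,\theta_1,R_1)$ (knowing $\psi$ and $\sigma_n$ determines $\psi(F^n(z,w))$ and $w_n$, hence $(z_n,w_n)$, hence $(z,w)$); the uniform limit $Q=(\psi,\sigma)$ is then either injective or has identically vanishing Jacobian by a Hurwitz-type argument, and a single computation of ${\sf Jac}\,Q$ at the diagonal points $(r,r)$ rules out the latter. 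This completely sidesteps the question of whether a segment from $(U_1,w_1)$ to $(U_2,w_2)$ stays in the domain. Your claim that ``the same convexity-of-slices argument used in Lemma~\ref{Lem-psi-quasi-inj}'' handles this is not correct: that lemma treats only $U$-segments at \emph{fixed} $w$, and the full two-dimensional segment can exit the $w$-annulus (for instance when $w_1$ and $w_2$ lie on nearly opposite sides of $0$). Your approach is salvageable, but it needs a preliminary crude bound $|w_1-w_2|\le 2C|U_1|^{-\alpha}=o(|U_1|^{\beta_1-1})=o(|w_1|)$ (using $\alpha>1-\beta_0>1-\beta_1$), obtained directly from the $\sigma$-equation and the sup bound on $\eta$, \emph{before} any path integration; only then does the broken path $(U_1,w_1)\to(U_2,w_1)\to(U_2,w_2)$ remain in a slightly enlarged copy of the domain.

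A minor point: your stated Cauchy exponents are too optimistic. One only gets $\partial v/\partial U=O(|U|^{-1})$ (cf.\ \eqref{fuuuf}), and since the admissible Cauchy radius in $w$ is proportional to $|w|\ge|U|^{\beta_1-1}$, one has $\partial v/\partial w=O(|U|^{-\beta_1})$ and $\partial\eta/\partial w=O(|U|^{-(\alpha+\beta_1-1)})$ rather than the powers you wrote. These are all still $o(1)$ as $|U|\to\infty$, so your linear system remains nondegenerate and the conclusion survives with the corrected bounds.
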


\begin{proof} 
Let $R_1\geq R_0$, $\beta_1\in (\beta_0, \frac{1}{2})$  and $0<\theta_1\leq\theta_0$ be given by Lemma \ref{Lem-psi-quasi-inj}. Thanks to the injectivity of $B(\beta_1, \theta_1, R_1)\ni (z,w)\mapsto (\psi(z,w), w)$ showed in Lemma \ref{Lem-psi-quasi-inj}, it follows easily that the map
\[
B(\beta_1, \theta_1, R_1)\ni (z,w)\mapsto (\psi(z,w), \sigma_n(z,w))
\]
is injective for all $n\in \N$, where $\sigma_n$ is the map defined in \eqref{sigma-n} for $n\in \N$. Since $\sigma$ is the uniform limit of the sequence $\{\sigma_n\}$, it follows that either the Jacobian of $Q=(\psi,\sigma)$ is identically zero on $B(\beta_1, \theta_1, R_1)$, or $Q$ is injective on $B(\beta_1, \theta_1, R_1)$. 

We now compute the Jacobian of $Q$ at $(r,r)\in B(\beta_1, \theta_1, R_1)$, for $r>0$, $r$ sufficiently small. To simplify computation, we consider the holomorphic change of coordinates $\chi\colon B(\beta_1, \theta_1, R_1)\to \C^2$ given by $\chi(z,w)=(\frac{1}{zw},w)=(U,w)$ and we compute the Jacobian of $Q(U,w)$ at $(\frac{1}{r^2}, r)$. 

By Proposition \ref{BRZ} and Proposition \ref{Prop:second-local-coord}, we have
\begin{equation}\label{Q-expr}
Q(U,w)=(U+c\log U+v(U,w), w+\eta(U,w)),
\end{equation}
where $v(U,w)=\frac{1}{U}g(U,w)$ and $\eta(U,w)=\frac{1}{U^\alpha}h(U,w)$, $\alpha\in (1-\beta_0,1)$, with $|g|, |h|\leq C$ for some $C>0$ on $B$. Hence,
\begin{equation*}
\begin{split}
{\sf Jac}_{\left(\frac{1}{r^2},r\right)}Q&=\det 
\left(\begin{matrix}
1+cr^2+\frac{\partial v}{\partial U}\left(\frac{1}{r^2},r\right)& \frac{\partial v}{\partial w}\left(\frac{1}{r^2},r\right)\\
\frac{\partial \eta}{\partial U}\left(\frac{1}{r^2},r\right)& 1+\frac{\partial \eta}{\partial w}\left(\frac{1}{r^2},r\right)
\end{matrix}\right)\\
&=\left(1+cr^2+\frac{\partial v}{\partial U}\left(\frac{1}{r^2},r\right)\right)\left(1+\frac{\partial \eta}{\partial w}\left(\frac{1}{r^2},r\right) \right)-\frac{\partial v}{\partial w}\left(\frac{1}{r^2},r\right)\frac{\partial \eta}{\partial U}\left(\frac{1}{r^2},r\right).
\end{split}
\end{equation*}
First of all, note that for  $\gamma\in (0,\frac{1}{2})$, $\tilde R>1$ and  $\tilde\theta\in (0,\frac{\pi}{2})$ there exists $r_0>0$ such that $(\frac{1}{r^2}, r)\in \chi((B(\gamma, \tilde\theta, \tilde R)))$ for all $r\in (0,r_0)$.  Hence, by \eqref{fuuuf}, there exists $C_2>0$ such that for  $r$ sufficiently small, 
\[
\left|\frac{\partial v}{\partial U}\left(\frac{1}{r^2},r\right)\right|\leq r^2C_2.
\]
A similar argument as in \eqref{fuuuf} for $\eta$ instead of $v$, shows that for  $r$ sufficiently small, 
\[
\left|\frac{\partial \eta}{\partial U}\left(\frac{1}{r^2},r\right)\right|\leq r^{2\alpha} C_3,
\]
for some $C_3>0$.

On the other end, it is easy to check that, for every $t\in \R$, $(\frac{1}{r^2}, r(1+\frac{e^{it}}{2}))\in \chi(B)$ whenever $r$ is positive and small enough. Hence, by the Cauchy formula for derivatives
\begin{equation*}
\left|\frac{\partial v}{\partial w}\left(\frac{1}{r^2},r\right)\right|
=\frac{1}{2\pi}\left|\int_{|\zeta-r|=r/2}\frac{v(\frac{1}{r^2},\zeta)}{(\zeta-r)^2}d\zeta\right|
=\frac{r^2 \max_{|\zeta-r|=r/2}|g(\frac{1}{r^2},\zeta)|}{r}\leq Cr.
\end{equation*}
Similarly,
\[
\left|\frac{\partial \eta}{\partial w}\left(\frac{1}{r^2},r\right)\right|\leq C r^{2\alpha-1}.
\]
Therefore,
\[
{\sf Jac}_{\left(\frac{1}{r^2},r\right)}Q=1+O(r^{2\alpha-1}),
\]
showing that the Jacobian is not zero for $r$ sufficiently small since $\alpha>1/2$. Hence $Q$ is injective on $B(\beta_1, \theta_1, R_1)$.

Now we prove  there exist $\tilde R>1$, $\tilde \theta\in (0,\frac{\pi}{2})$ and $\tilde \beta\in (0,\frac{1}{2})$ such that \eqref{inside-claim} holds. The rough idea is that $Q|_{B}$ is ``very close'' to the map $(z,w)\mapsto (\frac{1}{zw}-c\log (zw), w)$, for which the statement is true, and hence \eqref{inside-claim} follows by Rouch\'e's Theorem. 

Consider again the constants $R_1\geq R_0$, $\beta_1\in (\beta_0, \frac{1}{2})$  and $\theta_1\in(0,\theta_0]$ given by Lemma~\ref{Lem-psi-quasi-inj}, and the holomorphic change of coordinates on $\tilde B$ given by $\chi(z,w)=(\frac{1}{zw}, w)=(U,w)$. Then $\chi(\tilde B)=\{(U,w): U\in H(R_1,\theta_1), |U|^{\beta_1-1}<|w|<|U|^{-\beta_1}\}$. 

The map $\chi(\tilde B)\ni (U,w)\mapsto Q(U,w)=(\psi(U,w), \sigma(U,w))$ is given by \eqref{Q-expr}. In particular
\begin{equation}\label{good-exp}
\psi(U,w)=U(1+\tau(U,w)), 
\end{equation}
where $|\tau|<C$   on $\chi(\tilde B)$ for some $C>0$, and $\lim_{|U|\to \infty}\tau(U,w)=0$.
This implies immediately that there exist $\tilde R_1>0$ and $\tilde\theta\in (0,\frac{\theta_0}{2})$ such that $H(\tilde R_1,2\tilde\theta)\subset \psi(\tilde B)\subset \psi(B)$. 

To prove \eqref{inside-claim} it suffices to show that there exist $\tilde R\geq \tilde R_1$ and $\tilde\beta\in (\beta_1, \frac{1}{2})$ such that for every $\zeta_0\in H(\tilde R,\tilde\theta)$,  
\begin{equation}\label{last-claim}
\{\xi\in \C: |\zeta_0|^{\tilde\beta-1}<|\xi|<|\zeta_0|^{-\tilde\beta}\}\subset \sigma(\psi^{-1}(\zeta_0)).
\end{equation}

In order to prove \eqref{last-claim}, we first show that there exist $\tilde R_2\geq \tilde R_1$ and $\tilde \beta_2\in (\beta_1, \frac{1}{2})$ such that for every $\zeta_0\in H(\tilde R_2,\tilde\theta)$ it holds
\begin{equation}\label{fiber-pi2}
\{\xi\in \C: |\zeta_0|^{\tilde\beta_2-1}<|\xi|<|\zeta_0|^{-\tilde\beta_2}\}\subset \pi_2(\psi^{-1}(\zeta_0)).
\end{equation}
Indeed, by \eqref{good-exp},  $\zeta_0=\psi(U,w)=U(1+\tau(U,w))$ with $|\tau|<C$ and $\lim_{|U|\to \infty}\tau(U,w)=0$. Hence, if $\zeta_0\in H(\tilde R_2,\tilde\theta)$ for some $\tilde R_2\geq \tilde R_1$,
\[
|U|\geq \frac{|\zeta_0|}{1+|\tau(U,w)|}\geq \frac{\tilde R_2}{1+C}.
\]
Therefore, given $c'\in (0,1)$, we can choose $\tilde R_2\geq \tilde R_1$ large enough so that for every $(U,w)\in \chi(\tilde B)$ such that $\psi(U,w)=\zeta_0$ and $\zeta_0\in H(\tilde R_2,\tilde\theta)$, the modulus $|U|$ is so large that
$|\tau(U,w)|<c'$. This implies that
\begin{equation}\label{c-stima-U}
(1-c')|U|<|\zeta_0|<(1+c')|U|
\end{equation}
for every $U\in \C$ such that there exists $w\in \C$ so that $(U,w)\in \chi(\tilde B)$ and $\psi(U,w)=\zeta_0\in H(\tilde R_2,\tilde\theta)$.  

Let $\tilde \beta_2\in (\beta_1, \frac{1}{2})$. Let $r_0>0$ be such that
\[
\frac{1}{[(1+c')t]^{1-\beta_1}}<\frac{1}{t^{1-\tilde\beta_2}}<\frac{1}{[(1-c')t]^{\tilde\beta_2}}<\frac{1}{t^{\beta_1}}, \quad \forall t\geq r_0.
\] 
Up to choosing $\tilde R_2\geq r_0$, \eqref{c-stima-U} implies that 
\begin{equation}\label{one-another}
|U_0|^{\beta_1-1}<|\zeta_0|^{\tilde\beta_2-1}<|\zeta_0|^{-\tilde\beta_2}<|U_0|^{-\beta_1}
\end{equation}
for every $U_0\in \C$ such that there exists $w\in \C$ so that $(U_0,w)\in \chi(	\tilde B)$ and $\psi(U_0,w)=\zeta_0\in H(\tilde R_2,\tilde\theta)$. 

Fix $\zeta_0\in H(\tilde R_2,\tilde\theta)$ and fix $\xi_0\in \C$ such that  $|\zeta_0|^{\tilde\beta_2-1}<|\xi_0|<|\zeta_0|^{-\tilde\beta_2}$. Since there exists $(U_0,w_0)\in \chi(\tilde B)$ such that $\psi(U_0,w_0)=\zeta_0$, it follows from \eqref{one-another} that $(U_0,\xi_0)\in \chi(\tilde B)$.  In particular, $\chi(\tilde B)\cap \{w=\xi_0\}\neq \emptyset$. Set
\[
A(\xi_0):=
\left\{U\in H(R_1,\theta_1): \frac{1}{|\xi_0|^{\frac{1}{1-\beta_1}}}<|U|<\frac{1}{|\xi_0|^{\frac{1}{\beta_1}}}\right\}
=\chi(\tilde B)\cap \{w=\xi_0\}.
\]
Then, 
\[
A(\xi_0)\ni U\mapsto \psi_{\xi_0}(U):=\psi(U,\xi_0)=U+c\log U+\frac{g(U,\xi_0)}{U}\in \C
\]
is well defined and  holomorphic. Moreover, up to taking $\tilde R_2$ larger and $\tilde \theta$ smaller, we can assume that the set $H(\tilde R_2, \tilde\theta)$ is contained in the image of the map $\chi(\tilde B) \ni (U,w)\mapsto U+c\log U$. Hence, there exists $(U_0, w_0)\in \chi(\tilde B)$ such that  $U_0+c\log U_0=\zeta_0$. Since $\zeta_0=U_0(1+c\frac{\log U_0}{U_0})$, it follows that $|U_0|(1-\epsilon)\leq|\zeta_0|\leq |U_0|(1+\epsilon)$ for some $\epsilon\in (0,1)$, provided that $\tilde R_2$ is sufficiently large. Recalling that  $|\zeta_0|^{\tilde\beta_2-1}<|\xi_0|<|\zeta_0|^{-\tilde\beta_2}$, we have
\[
|U_0|\geq \frac{|\zeta_0|}{1+\epsilon} >\frac{1}{(1+\epsilon)|\xi_0|^{1/(1-\tilde\beta_2)}}>\frac{1}{|\xi_0|^{\frac{1}{1-\beta_1}}},
\]
where the last inequality holds provided $\tilde R_2$ is sufficiently large. Similarly, one can show that $|U_0|<\frac{1}{|\xi_0|^{\frac{1}{\beta_1}}}$, namely, $U_0 \in A(\xi_0)$. 
 
 Let $\delta\in (0,1)$ be such that $D(U_0,\delta):=\{U\in \C: |U-U_0|<\delta\}\subset A(\xi_0)$. Since $|g(U,\xi_0)|/|U|<c'$, up to choosing $\tilde R_2$ so large that $\displaystyle c'+\delta<|c|\max_{|U-U_0|=\delta}\left|\log{U}-\log{U_0}\right|$, it follows that for all $U\in \partial D(U_0,\delta)$,
\begin{equation*}
\begin{split}
|\psi_{\xi_0}(U)-U-c\log U|&<c'<|c|\left|\log\frac{U}{U_0}\right|-\delta\le|U+c\log U-\zeta_0|\\&\le|U+c\log U-\zeta_0|+|\psi_{\xi_0}(U)-\zeta_0|.
\end{split}
\end{equation*}
Hence, Rouch\'e's Theorem implies that there exists $U_1\in D(U_0,\delta)\subset A(\xi_0)$ such that $\psi(U_1,\xi_0)=\psi_{\xi_0}(U_1)=\zeta_0$, proving \eqref{fiber-pi2}. 

Let $K\colon\chi(\tilde B)\to \C^2$ be defined by $K(U,w):=(\psi(U,w), w)$. Then the map $K$ is injective and from \eqref{fiber-pi2}, we obtain that
\begin{equation}\label{XQR}
\chi(B(\tilde\beta_2,  \tilde\theta,  \tilde R_2))\subset K(\chi(\tilde B)).
\end{equation}
Let $\tilde R\geq \tilde R_2$, and let $\zeta_0\in H(\tilde R, \tilde \theta)$. Thanks to \eqref{XQR}, we have $(\zeta_0,w)\in K(\chi(\tilde B))$ for every $w\in J(\zeta_0)$, where
\[
J(\zeta_0):=\{w \in \C:  |\zeta_0|^{\tilde\beta_2-1}<|w|<|\zeta_0|^{-\tilde\beta_2}\}.
\]
 
Let $\tilde\beta\in (\tilde\beta_2,\frac{1}{2})$, and let $\xi_0\in \C$ be such that $ |\zeta_0|^{\tilde\beta-1}<|\xi_0|<|\zeta_0|^{-\tilde\beta}$. In particular $\xi_0\in J(\zeta_0)$, and setting $r:=\min\{|\zeta_0|^{\tilde \beta-1}-|\zeta_0|^{\tilde\beta_2-1}, |\zeta_0|^{-\tilde\beta_2}-|\zeta_0|^{-\tilde\beta}\}>0$, the disc $D(\xi_0,r):=\{\xi\in \C: |\xi-\xi_0|<r\}$ is contained in $J(\zeta_0)$. 
Moreover, if $\tilde R$ is sufficiently large,
\begin{equation}\label{r-minmin}
r>\frac{1}{2}\min\{|\zeta_0|^{\tilde \beta-1}, |\zeta_0|^{-\tilde \beta_2}\}.
\end{equation}

Set $(\tilde U,w):=K(U,w)$. For every $(\tilde U, w)\in K(\chi(\tilde B))$, we can write
\[
\tilde\sigma(\tilde U, w):=(\sigma \circ K^{-1})(\tilde U, w)=w+\eta(\tilde U,w),
\]
where $\eta(\tilde U,w)=\frac{1}{\tilde U^\alpha}h(\tilde U,w)$, with $\alpha\in (1-\beta_0,1)$, and $|h|\leq C$ for some $C>0$. By \eqref{r-minmin}, since $\alpha>1-\beta_0>1/2$, if $\tilde R$ is sufficiently large, then
$|\eta(\zeta_0, w)|<r$, for every $w\in J(\zeta_0)$.  Therefore,  for all $w\in \partial D(\xi_0,r)$, 
\[
|w-\tilde\sigma(\zeta_0, w)|=|\eta(\zeta_0,w)|< r=|w-\xi_0|\leq |w-\xi_0|+|\tilde\sigma(\zeta_0, w)-\xi_0|.
\]
Hence, by Rouch\'e's Theorem, there exists $w_0\in D(\xi_0,r)$ such that $\tilde\sigma(\zeta_0, w_0)=\xi_0$. By the arbitrariness of $\xi_0$, this implies that for every $\zeta_0\in H(\tilde R, \tilde\theta)$ 
\[
\{\xi\in \C: |\zeta_0|^{\tilde\beta-1}<|\xi|<|\zeta_0|^{-\tilde\beta}\}\subset \tilde\sigma(\zeta_0, \cdot)(J(\zeta_0))\subset \sigma(\psi^{-1}(\zeta_0)),
\] 
which finally proves \eqref{last-claim}.
\end{proof}
 
\section{The topology of the global basin $\Omega$}\label{topology}
 
Let $F_N$ be a germ of biholomorphism of $\C^2$ at $(0,0)$ of the form \eqref{Expression FN}. Thanks to a result of  B. J.  Weickert \cite{W1} and F. Forstneri\v{c} \cite{F} (see in particular \cite[Corollary 2.2]{F}), given any $l\geq 2$ there exists an automorphism $F$ of $\C^2$ such that $\|F(z,w)-F_N(z,w)\|=O(\|(z,w)\|^l)$. In particular, given $\lambda$ a unimodular number not a root of unit, we take $l\geq 4$  such that $\beta_0 (l+1)\geq 4$, where $0<\beta_0<1/2$ is given by Theorem \ref{Thm:BZ}, and we consider automorphisms of $\C^2$ of the form
\begin{equation}\label{automFp}
F(z,w)=\left(\lambda z\left(1-\frac{zw}{2} \right)+R_l^1(z,w), \overline{\lambda}w \left(1-\frac{zw}{2} \right)+ R_l^2(z,w)\right),
\end{equation}
where $R_l^j(z,w)=O(\|(z,w)\|^l)$, $j=1,2$.

\begin{definition}
Let $F$ be an automorphism of $\C^2$ of the form \eqref{automFp}. Let $B$ be the local basin of attraction of $F$ given by Theorem \ref{Thm:BZ}. The \emph{global attracting basin of $F$} is
\[
\Omega:=\bigcup_{n\in \N} F^{-n}(B). 
\]
\end{definition}

In this section we are going to prove that the global basin $\Omega$ is biholomorphic to $\C\times \C^\ast$. We start by proving that $\Omega$ is not simply connected: 

\begin{proposition}\label{prop2.1}
The open set $\Omega$ is connected but not simply connected.
\end{proposition}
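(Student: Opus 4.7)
By Theorem~\ref{Thm:BZ}, $F(B)\subseteq B$, so applying $F^{-n}$ produces a nested chain $B\subseteq F^{-1}(B)\subseteq F^{-2}(B)\subseteq\cdots$. Since $F$ is an automorphism of $\C^2$, each $F^{-n}(B)$ is biholomorphic to $B$ via $F^n$, hence connected by Lemma~\ref{Omega}. All terms contain $B$, so the increasing union $\Omega$ is connected.

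\textbf{Non-simple-connectivity --- reduction to $B$.} By Lemma~\ref{Omega}, $\pi_1(B)\cong\Z$; fix a loop $\gamma\colon S^1\to B$ representing a generator. The plan is to show $\gamma$ is not null-homotopic in $\Omega$ by a compactness argument. If it were, a continuous null-homotopy $H\colon D^2\to\Omega$ would, by compactness of $D^2$ and $\Omega=\bigcup_n F^{-n}(B)$, have image inside some $F^{-N}(B)$; composing with the homeomorphism $F^N\colon F^{-N}(B)\to B$ then produces a null-homotopy of $F^N\circ\gamma$ inside $B$. So it suffices to prove $[F^N\circ\gamma]\neq 0$ in $\pi_1(B)$.

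\textbf{Winding computation.} Via the biholomorphism $\Phi$ of Lemma~\ref{Omega}, $\Phi(B)$ is a trivial annulus bundle over the simply connected sector $S(R_0,\theta_0)$, and I can take $\gamma(t):=\Phi^{-1}(x_0,r_0e^{2\pi it})$, whose $w$-component winds once around $0$; this winding classifies the homotopy class of a loop in $B$. The $w$-component of $F$ reads $\bar\lambda w(1-zw/2)+R^2_l(z,w)$ with $R^2_l=O(\|(z,w)\|^l)$, while on $B$ one has $|w|>|zw|^{1-\beta_0}$ and $\|(z,w)\|\lesssim |zw|^{\beta_0}$. The standing condition $\beta_0(l+1)\ge 4$ gives $\beta_0 l>1-\beta_0$, so the remainder is strictly dominated by the leading term $\bar\lambda w(1-zw/2)$ along $\gamma$, and the same comparison persists along each iterate $F^n\circ\gamma\subseteq B$ using the orbit asymptotics $|z_n|,|w_n|\sim n^{-1/2}$ of Lemma~\ref{go-good-down}. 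Rouch\'e's theorem, applied inductively on $n\le N$ to the $w$-coordinate, then yields that $F^n\circ\gamma$ has $w$-winding one about $0$ for every $n$, and in particular $[F^N\circ\gamma]$ generates $\pi_1(B)$, contradicting its null-homotopy.

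\textbf{Main obstacle.} The delicate point is to make the Rouch\'e comparison uniform along the finite orbit segment $(F^n\circ\gamma)_{0\le n\le N}$; forward invariance $F(B)\subseteq B$ from Theorem~\ref{Thm:BZ}, together with the orbit asymptotics of Lemma~\ref{go-good-down} and the numerical condition $\beta_0(l+1)\ge 4$, are exactly what make this comparison go through, so no new analytic input beyond what is already in Sections~\ref{localB}--\ref{local-coordi} is required.
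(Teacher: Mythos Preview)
Your proof is correct and follows essentially the same strategy as the paper's, namely to exploit that $\Omega$ is an increasing union of biholomorphic copies of the doubly connected set $B$ and then control what the inclusion maps do on $\pi_1$. The paper's argument is extremely terse: it simply asserts that $F_*$ is the identity on $\pi_1(B)\cong\Z$ (hence the direct limit is $\Z$), without justification. Your compactness reduction together with the winding-number computation is precisely a proof of this unjustified claim, so you are filling in what the paper leaves implicit rather than taking a different route.

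Two small remarks on the execution. First, what you invoke is really the ``dog-on-a-leash'' principle (if $|c_2|<|c_1|$ along a loop then $c_1$ and $c_1+c_2$ have the same winding about $0$), not Rouch\'e's theorem in its usual zero-counting form; the content is the same, but the naming could confuse a reader. Second, in the inductive step you compare the $w$-component of $F$ against $\bar\lambda w(1-zw/2)$; to conclude the $w$-winding is preserved you also need that the factor $1-zw/2$ contributes winding zero along $F^n\circ\gamma$, which follows since $|zw|<1/R_0$ is small on $B$ so $1-zw/2$ stays in a half-plane. You do not need the orbit asymptotics of Lemma~\ref{go-good-down} for this: the pointwise inequality $|R_l^2|\lesssim |zw|^{\beta_0 l}<|zw|^{1-\beta_0}<|w|$ holds everywhere on $B$, and forward invariance $F(B)\subseteq B$ is all that is required to iterate it.
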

\begin{proof} 
We see that $\Omega$ is the growing union of images biholomorphic to $B$ which is doubly connected by Lemma \ref{Omega}. Moreover, $F_*$ is the identity on $\pi_1(B)$ and on $H_1(B)$, therefore $\pi_1(\Omega) = H_1(\Omega) = \Z$.
\end{proof}

In order to prove that $\Omega$ is biholomorphic to $\C\times\C^\ast$, let us consider the Fatou coordinate $\psi$ for $F$ given by Proposition \ref{BRZ} and the holomorphic function $\sigma$ given by Proposition \ref{Prop:second-local-coord}. We can use the functional equation \eqref{func-psi} to extend $\psi$ to all $\Omega$. Indeed, let $p\in \Omega$. Then there exists $n\in \N$ such that $F^n(p)\in  B$. We define 
\[
g_1(p):=\psi(F^n(p))-n.
\]
Set $H:=g_1(B)$, and consider $\Omega_0:=g_1^{-1}(H)=\bigcup_{\zeta\in H}g_1^{-1}(\zeta)$. 

Using \eqref{func-sigma} we can extend $\sigma$ to $\Omega_0$ as follows.  For any $p\in \Omega_0$, we set
\begin{equation*}
\begin{split}
g_2(p)&:=\lambda^n \exp\left(\frac{1}{2}\sum_{j=0}^{n-1}\frac{1}{g_1(p)+j} \right)\sigma(F^n(p))\\
&=\lambda^n \exp\left(\frac{1}{2}\sum_{j=0}^{n-1}\frac{1}{\psi(F^n(p))+j-n} \right)\sigma(F^n(p)),
\end{split}
\end{equation*}
where $n\in \N$ is such that $F^n(p)\in  B$. Notice that, since $g_1(p)\in H$, we have $\Re g_1(p)>0$ and the previous formula is well defined.

The next lemma shows that the map $G:=(g_1, g_2)\colon \Omega_0\to \C^2$ is well defined and holomorphic: 

\begin{lemma}\label{lemma_g_1}
The map $G:=(g_1,g_2)\colon \Omega_0 \to \C^2$ is well-defined, holomorphic and injective. 
\end{lemma}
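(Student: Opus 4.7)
\medskip

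\noindent\textbf{Proof proposal.} The plan is to address the three claims---well-definedness, holomorphicity, and injectivity---separately.

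For \emph{well-definedness}, the subtle point is that several integers $n$ may satisfy $F^n(p)\in B$ (in fact, since $B$ is forward invariant under $F$ by Theorem \ref{Thm:BZ}, once some $F^{n_0}(p)\in B$, all subsequent iterates lie in $B$). I would verify that the definitions do not depend on the choice of $n$ by an induction showing that increasing $n$ by $1$ yields the same value. For $g_1$, the functional equation \eqref{func-psi} gives
\[
\psi(F^{n+1}(p))-(n+1)=\psi(F^n(p))+1-(n+1)=\psi(F^n(p))-n.
\]
For $g_2$, I would use the functional equation \eqref{func-sigma}: writing $\sigma(F^{n+1}(p))=\overline{\lambda}\exp(-\tfrac{1}{2\psi(F^n(p))})\sigma(F^n(p))$ and noting that $\psi(F^n(p))=g_1(p)+n$, the factor $\exp(-\tfrac{1}{2(g_1(p)+n)})$ exactly cancels the extra term $j=n$ appearing in the sum when passing from $n$ to $n+1$, while the factor $\overline{\lambda}$ converts $\lambda^{n+1}$ into $\lambda^n$.

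\emph{Holomorphicity} is then immediate: around any $p_0\in\Omega_0$, there is an open neighborhood $V\subset\Omega_0$ and an integer $n$ such that $F^n(V)\subset B$ (since $F^n(p_0)\in B$ and $B$ is open). On $V$ both $g_1$ and $g_2$ are given by holomorphic formulas involving $\psi$, $\sigma$, and $F^n$, and the series-like prefactor of $g_2$ is a finite sum in $n$ which depends holomorphically on $g_1(p)$ because $\Re g_1(p)>0$ on $\Omega_0$.

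For \emph{injectivity}, the key observation is that if $G(p)=G(q)$, then one can read off that, for every $n$ sufficiently large that both $F^n(p),F^n(q)\in B$,
\[
\psi(F^n(p))=g_1(p)+n=g_1(q)+n=\psi(F^n(q)),
\]
and, inverting the defining formula for $g_2$, also $\sigma(F^n(p))=\sigma(F^n(q))$. Hence $Q(F^n(p))=Q(F^n(q))$ for all large $n$. I then want to invoke Proposition \ref{local-inj}: it suffices to show that $F^n(p),F^n(q)$ eventually lie in $B(\beta_1,\theta_1,R_1)$, where $Q$ is injective. The eventual membership in $W(\beta_1)$ follows from statement (3) of Lemma \ref{go-good-down} applied with $\gamma=\beta_1$; the condition $u_n\in S(R_1,\theta_1)$ follows from statement (1) of the same lemma, which gives $nu_n\to 1$ and $u_n/|u_n|\to 1$, so $1/u_n$ lies in $H(R_1,\theta_1)$ and hence $u_n\in S(R_1,\theta_1)$ for $n$ large. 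Once $F^n(p)=F^n(q)$ for some $n$, the injectivity of the automorphism $F$ gives $p=q$.

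The main technical obstacle is clean bookkeeping of the shift/weight factors in $g_2$ and the compatibility check at stage $n+1$; the injectivity argument itself is then essentially transporting the local injectivity of $Q$ back along the orbit via the fact that $F$ is bijective on $\C^2$.
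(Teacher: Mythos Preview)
Your proposal is correct and follows essentially the same route as the paper's proof: well-definedness via the functional equations \eqref{func-psi} and \eqref{func-sigma} (the paper checks two arbitrary iterates $n<m$ at once, you do the inductive step $n\mapsto n+1$, which is equivalent), holomorphicity from the local formula and the fact that $\Re g_1>0$ keeps the denominators $g_1(p)+j$ away from zero, and injectivity by pushing forward along the orbit into the region $B(\beta_1,\theta_1,R_1)$ where $Q$ is injective (Proposition \ref{local-inj}) via Lemma \ref{go-good-down}, then pulling back with the bijectivity of $F$. Your justification for eventual membership in $B(\beta_1,\theta_1,R_1)$ using parts (1) and (3) of Lemma \ref{go-good-down} is slightly more explicit than the paper's, which simply cites the lemma.
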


\begin{proof}
The map $G$ is holomorphic by construction and since $\Re g_1(p)>0$ for all $p\in\Omega_0$.

The map $G$ is well defined. Indeed, if $n$ and $m$ are both integers so that $F^n(p)$ and $F^m(p)$ belong to $ B$, and $n<m$, then $F^m(p) = F^{m-n}(F^n(p))$. Therefore $\psi(F^m(p)) = \psi(F^{m-n}(F^n(p))) = \psi(F^n(p)) + m-n$, whence $\psi(F^m(p))-m = \psi(F^n(p))
-n$. Analogously, $\sigma(F^m(p))= \overline{\lambda}^{m-n}\exp((1/2)\sum_{j=0}^{m-n-1}1/(\psi(F^n(p))+j))\sigma(F^n(p))$, and so
\begin{equation*}
\begin{aligned}
&\lambda^m \exp\left(\frac{1}{2}\sum_{j=0}^{m-1}\frac{1}{\psi(F^m(p))+j-m} \right)\sigma(F^m(p))
\\
&=
\lambda^m \exp\left(\frac{1}{2}\sum_{j=0}^{m-1}\frac{1}{\psi(F^n(p))+j-n} \right)\overline{\lambda}^{m-n}\exp\left(-\frac{1}{2}\sum_{j=0}^{m-n-1}\frac{1}{\psi(F^n(p))+j}\right)\sigma(F^n(p))\\
&=\lambda^n \exp\left(\frac{1}{2}\sum_{j=0}^{n-1}\frac{1}{\psi(F^n(p))+j-n} \right)\sigma(F^n(p)),
\end{aligned}
\end{equation*}
and we are done.

Let us now prove the injectivity of $G$. Let $p, q\in \Omega_0$. By the very definition of $G$, $G(p)=G(q)$ if and only if 
\[
(\psi(F^n(p)), \sigma(F^n(p)))=(\psi(F^n(q)), \sigma(F^n(q)))
\]
for all $n\in \N$ such that $F^n(p)$ and $F^n(q)$ are contained in $B$. By Proposition \ref{local-inj}, there exist $R_1\geq R_0$, $\beta_1\in (\beta_0, \frac{1}{2})$  and $0<\theta_1\leq\theta_0$  such that  $Q:=(\psi, \sigma)$ is injective on  $B(\beta_1, \theta_1, R_1)$. Also, by Lemma \ref{go-good-down}, there exists $n\in \N$ such that $F^n(p), F^n(q)\in B(\beta_1, \theta_1, R_1)$. Therefore, $G(p)= G(q)$ if and only if $p=q$.
 \end{proof}
 
\begin{proposition}\label{HxCstar}
$G(\Omega_0)=H\times \C^\ast$.
\end{proposition}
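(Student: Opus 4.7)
The inclusion $G(\Omega_0)\subseteq H\times\C^\ast$ is immediate: $g_1(\Omega_0)=H$ by construction of $\Omega_0=g_1^{-1}(H)$, and $g_2(p)\neq 0$ for every $p\in\Omega_0$ because $\sigma$ is nowhere vanishing on $B$ by Proposition~\ref{Prop:second-local-coord}, so each factor in the definition of $g_2$ is non-zero. The work lies in the reverse inclusion.

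My plan is the following. Fix $(\zeta_0,w_0)\in H\times\C^\ast$. The functional equations $g_1\circ F=g_1+1$ and $g_2\circ F=\overline\lambda e^{-1/(2g_1)}g_2$ force any preimage $p$ of $(\zeta_0,w_0)$ under $G$ to satisfy, for every $n\in\N$,
\[
G(F^n(p))=\bigl(\zeta_0+n,\ w_n\bigr),\qquad w_n:=\overline\lambda^n\exp\Bigl(-\tfrac12\sum_{j=0}^{n-1}\tfrac{1}{\zeta_0+j}\Bigr)w_0.
\]
Conversely, if for some $n$ I can produce a point $q_n\in B$ with $Q(q_n)=(\psi(q_n),\sigma(q_n))=(\zeta_0+n,w_n)$, then $p:=F^{-n}(q_n)$ lies in $F^{-n}(B)\subset\Omega$ and satisfies $g_1(p)=\psi(q_n)-n=\zeta_0\in H$, so $p\in\Omega_0$ and $G(p)=(\zeta_0,w_0)$. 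Thus the whole proposition reduces to finding such a $q_n\in B$ for some $n\in\N$.

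To produce $q_n$, I will appeal to the inclusion \eqref{inside-claim} of Proposition~\ref{local-inj}: it suffices to check that for some large $n$,
\[
\zeta_0+n\in H(\tilde R,\tilde\theta)\quad\text{and}\quad |\zeta_0+n|^{\tilde\beta-1}<|w_n|<|\zeta_0+n|^{-\tilde\beta}.
\]
Since $\zeta_0$ is fixed and $\Re(\zeta_0+n)\to\infty$ while $\mathrm{Arg}(\zeta_0+n)\to 0$, the first condition holds for all $n$ large enough. For the annulus condition, I will estimate $|w_n|$ using the asymptotics already established: writing $\zeta_0=\psi(p_0)$ for some $p_0\in B$, Lemma~\ref{go-good-down} (applied to the orbit $F^j(p_0)$, which stays in $B$) together with Proposition~\ref{BRZ} yields $\zeta_0+j\sim j$ with $\mathrm{Arg}(\zeta_0+j)\to 0$. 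A standard Riemann-sum comparison then gives
\[
\Re\sum_{j=0}^{n-1}\frac{1}{\zeta_0+j}=\log n+O(1),\qquad\text{hence}\qquad |w_n|\asymp\frac{|w_0|}{\sqrt{n}}.
\]
Since $|\zeta_0+n|^{\pm\tilde\beta}\asymp n^{\pm\tilde\beta}$, the annulus condition becomes $n^{\tilde\beta-1/2}\lesssim|w_0|\lesssim n^{1/2-\tilde\beta}$. As $\tilde\beta<1/2$, both the lower bound tends to $0$ and the upper bound to $+\infty$, so for any fixed $w_0\in\C^\ast$ the inequality is satisfied for all sufficiently large $n$, and \eqref{inside-claim} supplies the required $q_n\in B$.

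The only genuine subtlety is the logarithmic asymptotic for the series $\sum 1/(\zeta_0+j)$, but since the behaviour of $\zeta_0+j$ matches that of $U_j$ up to a bounded shift, this reduces directly to the estimates derived in the proof of Proposition~\ref{BRZ}. Everything else is a bookkeeping application of the functional equations and the surjectivity statement \eqref{inside-claim}.
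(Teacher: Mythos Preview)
Your proof is correct and follows essentially the same strategy as the paper: iterate the target under the model map $T(\zeta,\xi)=(\zeta+1,\overline\lambda e^{-1/(2\zeta)}\xi)$, verify that for large $n$ the iterate lands in the set \eqref{inside-claim}, and pull back by $F^{-n}$. The only cosmetic differences are that the paper obtains $|w_n|\sim n^{-1/2}$ by a direct elementary computation of $\sum_j 1/(\zeta_0+j)$ (your detour through a base point $p_0\in B$ is unnecessary, since $\zeta_0+j\sim j$ is trivial for any fixed $\zeta_0$), and that it handles the inclusion $G(\Omega_0)\subseteq H\times\C^\ast$ via a topological remark on the fundamental group rather than your direct argument.
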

\begin{proof}
Let $T:\C^2\to\C^2$ be defined by 
\[
T(\zeta, \xi):=(\zeta+1,\overline{\lambda}e^{-\frac{1}{2\zeta}}\xi).
\]
Notice that $T$ is not defined at $\zeta=0$. However, since $g_1(\Omega_0)=H$, the map $T$ is well-defined and holomorphic on $G(\Omega_0)$ and satisfies
\[
G\circ F=T\circ G.
\]

Let $(\zeta_0, \xi_0)\in H\times \C^\ast$. By induction, for $n\in \N$, we have
\[
(\zeta_n,\xi_n)
:=
T^n(\zeta_0,\xi_0)=\left(\zeta_0+n,\overline{\lambda}^n\exp\left(-\frac{1}{2}\sum_{j=0}^{n-1}\frac{1}{\zeta_0+j} \right) \xi_0\right).
\] 
Now, 
\begin{equation*}
\begin{split}
|\xi_n|
&=\exp\left(-\frac{1}{2}\sum_{j=0}^{n-1}\Re\left(\frac{1}{\zeta_0+j} \right) \right)|\xi_0|\\
&=\exp\left(-\frac{1}{2}\sum_{j=1}^{n-1}\frac{1}{j}\left(\frac{1+j^{-1}\Re \zeta_0}{\left|j^{-1}\zeta_0+1\right|^2}\right)\right)\exp\left(-\Re\frac{\zeta_0}{2|\zeta_0|^2} \right)|\xi_0|,
\end{split}
\end{equation*}
which implies that  
\begin{equation*}
|\zeta_n|\sim n, \quad |\xi_n|\sim \frac{1}{\sqrt{n}}.
\end{equation*}
Therefore, given $\tilde\beta\in (0,\frac{1}{2})$, for all $n$ sufficiently large, 
\begin{equation}\label{xn}
|\zeta_n|^{\tilde\beta-1}<|\xi_n|<|\zeta_n|^{-\tilde\beta}.
\end{equation}
Moreover, since $\zeta_n=\zeta_0+n$, it follows that, given $\tilde R>0$ and  $\tilde \theta\in (0,\frac{\pi}{2})$, for all $n$ sufficiently large,
\begin{equation}\label{Hxn}
\zeta_n\in H(\tilde R, \tilde\theta).
\end{equation}

Note that $G(z,w)=Q(z,w)=(\psi(z,w), \sigma(z,w))$ for all $(z,w)\in B$. Hence, by Proposition \ref{local-inj}, there exist $\tilde\beta\in (0,\frac{1}{2})$, $\tilde\theta\in (0,\pi/2)$ and $\tilde R>1$ such that $\{(U,w)\in\C^2: U\in H(\tilde R, \tilde\theta), |U|^{\tilde\beta-1}<|w|<|U|^{-\tilde\beta}\}\subset G(B)$. Therefore, from \eqref{xn} and \eqref{Hxn}, it follows at once that  $H\times \C^\ast\subseteq G(\Omega_0)$, and, in fact, equality holds since $\Omega_0$ --- and hence $G(\Omega_0)$ --- is not simply connected. 
\end{proof}

We finally have all ingredients to prove the final result of this section.

\begin{proposition}\label{CxCstar}
$\Omega\simeq \C\times\C^\ast$.
\end{proposition}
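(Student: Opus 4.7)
The plan is to realize $g_1\colon\Omega\to\C$ (as defined at the beginning of Section \ref{topology} on all of $\Omega$, via $g_1(p)=\psi(F^n(p))-n$ for any $n$ with $F^n(p)\in B$) as the projection of a holomorphic principal $\C^\ast$-bundle, and then invoke triviality of line bundles on $\C$. Since $\C$ is Stein and contractible, $H^1(\C,\mathcal O)=0$ (Cartan's Theorem~B) and $H^2(\C,\Z)=0$, so the exponential sheaf sequence gives $H^1(\C,\mathcal O^\ast)=0$; thus any such bundle is globally holomorphically trivial, and $\Omega\simeq\C\times\C^\ast$ follows.

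First I would verify $g_1(\Omega)=\C$. By Proposition \ref{local-inj}, $H=g_1(B)=\psi(B)$ contains a set of the form $H(\tilde R,\tilde\theta)$. Given any $\zeta_0\in\C$, for $n$ sufficiently large we have $\zeta_0+n\in H$, and picking $q\in\Omega_0$ with $G(q)=(\zeta_0+n,\xi)$ for some $\xi\in\C^\ast$ (possible by Proposition \ref{HxCstar}), the point $p:=F^{-n}(q)\in\Omega$ satisfies $g_1(p)=\zeta_0$.

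Next I would establish local triviality with fiber $\C^\ast$. Fix $\zeta_0\in\C$ and choose $n\in\N$ together with a neighborhood $V\ni\zeta_0$ small enough that $V+n\Subset H$ and $V+k$ avoids $0$ for $k=1,\ldots,n$. Because $g_1\circ F^n=g_1+n$ and $\Omega$ is $F$-invariant, $F^n$ induces a biholomorphism $g_1^{-1}(V)\to g_1^{-1}(V+n)\cap\Omega_0$; composing with $G$ from Proposition \ref{HxCstar} gives a biholomorphism
\[
\tau_V\colon g_1^{-1}(V)\longrightarrow V\times\C^\ast,\qquad p\longmapsto\bigl(g_1(p),\,g_2(F^n(p))\bigr).
\]
In particular every fiber of $g_1$ is biholomorphic to $\C^\ast$. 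For two overlapping trivializations built from integers $n_1<n_2$, iterating the intertwining $G\circ F=T\circ G$ yields
\[
g_2(F^{n_2}(p))=g_2(F^{n_1}(p))\cdot\overline\lambda^{\,n_2-n_1}\exp\!\left(-\frac12\sum_{j=0}^{n_2-n_1-1}\frac{1}{g_1(p)+n_1+j}\right),
\]
which (after shrinking $V_1\cap V_2$ slightly so that $g_1(p)+n_1+j$ avoids $0$) is a nowhere-vanishing holomorphic function of $g_1(p)$ alone. Hence $\tau_{V_2}\circ\tau_{V_1}^{-1}$ has the form $(\zeta,\xi)\mapsto(\zeta,c(\zeta)\xi)$ with $c\in\mathcal O^\ast$, so $g_1\colon\Omega\to\C$ is a holomorphic principal $\C^\ast$-bundle, and the cohomological argument above completes the proof.

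The main obstacle is the bookkeeping around the essential singularities of $T^{-1}$ at integer first coordinates: each trivialization's integer $n$ and neighborhood $V$ must be chosen so that $g_1+k$ avoids $0$ on all relevant overlaps. Because these constraints cut out only a discrete set of base points, a small refinement of the cover always suffices and the singular locus of $T$ never actually obstructs the bundle structure.
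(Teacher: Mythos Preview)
Your proposal is correct and matches the paper's proof: the paper takes the explicit cover $\{H_n:=H-n\}_{n\in\N}$ of $\C$, defines trivializations $\varphi_n(p)=G(F^n(p))-(n,0)$ over $g_1^{-1}(H_n)$, computes the transition $\varphi_n\circ\varphi_{n+1}^{-1}(\zeta,w)=(\zeta,\lambda e^{1/(2(\zeta+n))}w)$, and concludes via $H^1(\C,\mathcal O_\C^\ast)=0$ exactly as you do. Your flagged ``main obstacle'' is in fact a non-issue: since $H$ lies in the right half-plane (the paper notes $\Re g_1>0$ on $\Omega_0$ just after defining $g_2$), every denominator $g_1(p)+n_1+j$ with $j\ge0$ appearing in your transition cocycle is automatically nonzero, so no refinement of the cover is required.
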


\begin{proof}
Consider again $H:= g_1(B)$ and set $H_n:=H-n$. Since $\psi(B)\subset H$, we clearly have $\bigcup_{n\in\N} H_n=\C$. For each $n$, define $\varphi_n:g_1^{-1}(H_n)\to \C^2$ by
\[
\varphi_n(z,w):=G(F^n(z,w))-(n,0).
\]
Note that $g_1(F^n(z,w))=g_1(z,w)+n$, hence $F^n$ is a fiber preserving biholomorphism from $(g_1^{-1}(H_n))$ to $\Omega_0$. Therefore, by Proposition \ref{HxCstar}
\[
\varphi_n: g_1^{-1}(H_n)\to H_n\times \C^\ast
\]
is a fiber preserving biholomorphism. Moreover, for each $p\in \Omega$, if $F^n(p)\in \Omega_0$ we have
\[
G(F^{n+1}(p))=G(F(F^n(p)))=T(G(F^n(p))). 
\]
Now, take $\zeta\in H_n\cap H_{n+1}$ and let $w\in \C^\ast$. Note that $\zeta\mapsto \lambda e^{\frac{1}{2(\zeta+n)}}$ is a never vanishing holomorphic function on $H_n\cap H_{n+1}$. Hence, thanks to the previous equation, we have
\[
\varphi_n\circ \varphi_{n+1}^{-1}(\zeta, w)=(G\circ F^n)\circ (G\circ F^n)^{-1} T^{-1}(\zeta+n+1,w)-(n,0)=(\zeta, \lambda e^{\frac{1}{2(\zeta+n)} }w).
\]
This proves that $\Omega$ is a fiber bundle over $\C$ with fiber $\C^\ast$ and with transition functions $\zeta\mapsto \lambda e^{\frac{1}{2(\zeta+n)}}$ on $H_n\cap H_{n+1}$. 
In particular, $\Omega$ is a line bundle minus the zero section over $\C$. Since $H^1(\C, \mathcal O_\C^\ast)=0$, that is, all line bundles over $\C$ are (globally) holomorphically trivial, we obtain that $\Omega$ is biholomorphic to $\C\times\C^\ast$.
\end{proof}

\section{The global basin $\Omega$ and the Fatou component  containing $B$}\label{FGB}

Let $F$ be an automorphism of the form \eqref{automFp} as in the previous section, let $B$ be the local basin of attraction given by Theorem \ref{Thm:BZ} and $\Omega$ the associated global basin of attraction. Since $B$ is connected by Lemma \ref{Omega}, and $\{F^n\}$ converges to $(0,0)$ uniformly on  $B$, there exists an invariant Fatou component, which we denote by $V$, containing $B$, and we clearly have $\Omega\subseteq V$.
\smallskip

The aim of this section is to  characterize $\Omega$ in terms of orbits behavior, and to prove that  $\Omega=V$ under a generic condition on $\lambda$.

We use the same notations introduced in the previous sections. We start with the following corollary of Lemma~\ref{go-good-down}.

\begin{corollary}\label{Cor:how-Omega-is}
Let $F$ be an automorphism of $\C^2$ of the form \eqref{automFp}. Suppose that $\{(z_n,w_n):=F^n(z_0,w_0)\}$, the orbit under $F$ of a point $(z_0,w_0)$,  converges to $(0,0)$. Then $(z_0,w_0)\in \Omega$ if and only if $(z_n,w_n)$ is eventually contained in $W(\beta)$ for some---and hence any---$\beta\in (0,1/2)$ such that $\beta(l+1)>2$. 
\end{corollary}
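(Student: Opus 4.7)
The plan is to derive the corollary as an essentially direct consequence of Lemma \ref{go-good-down} (and the forward invariance of $B$ from Theorem \ref{Thm:BZ}). The statement is a biconditional, so I will treat the two implications separately, and the real content of the equivalence will be a single application of Lemma \ref{go-good-down}.

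For the forward implication, suppose $(z_0,w_0)\in\Omega$. By definition of $\Omega$ there exists $m\in\N$ with $F^m(z_0,w_0)\in B$. Since $B=B(\beta_0,\theta_0,R_0)$ is forward invariant under $F$ by Theorem \ref{Thm:BZ}, we have $(z_n,w_n)\in B\subset W(\beta_0)$ for every $n\geq m$. Recall that $\beta_0(l+1)\geq 4>2$, so $(z_n,w_n)\in W(\beta_0)$ eventually and the hypothesis of Lemma \ref{go-good-down} is satisfied with $\beta_0$. Statement (3) of that lemma then yields that $(z_n,w_n)\in W(\gamma)$ eventually for \emph{every} $\gamma\in(0,1/2)$, which in particular covers every $\beta\in(0,1/2)$ with $\beta(l+1)>2$, giving the ``for any'' form of the conclusion.

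For the reverse implication, suppose the orbit converges to $(0,0)$ and that $(z_n,w_n)\in W(\beta)$ eventually, for some $\beta\in(0,1/2)$ with $\beta(l+1)>2$. Then the hypotheses of Lemma \ref{go-good-down} are met verbatim, and the final assertion of that lemma gives that $(z_n,w_n)\in B$ for all $n$ sufficiently large. Choose such an $n$; then $(z_0,w_0)\in F^{-n}(B)\subset\Omega$.

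There is no real obstacle here: once Lemma \ref{go-good-down} is available, the corollary reduces to unwinding the definitions of $\Omega$ and of $W(\beta)$ together with the forward invariance of $B$. The only mild point to track is that the equivalence ``for some $\beta \iff$ for any $\beta$'' in the admissible range is precisely the content of Lemma \ref{go-good-down}(3), so both the statement and its quantifier structure are already encoded in the preceding lemma.
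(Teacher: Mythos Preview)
Your proof is correct and follows essentially the same approach as the paper: both directions are obtained directly from Lemma \ref{go-good-down} (the final assertion for the reverse implication, and part (3) together with the forward invariance of $B$ for the forward implication). Your write-up is slightly more explicit about invoking Theorem \ref{Thm:BZ} for forward invariance, but the argument is the same.
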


\begin{proof}
If $(z_n,w_n)\in W(\beta)$  eventually for some $\beta\in (0,1/2)$ with $\beta(l+1)>2$ then, by Lemma \ref{go-good-down}, $(z_n,w_n)\in B$ eventually, and hence, $(z_0,w_0)\in \Omega$.  Conversely, if $(z_0,w_0)\in \Omega$, then $(z_n,w_n)\in W(\beta_0)$ eventually and $\beta_0(l+1)\geq 4$, and hence Lemma \ref{go-good-down} implies that $(z_n,w_n)\in W(\beta)$ eventually for any $\beta\in (0,1/2)$ such that $\beta(l+1)>2$.
\end{proof}

We can now prove the following characterization of $\Omega$.

\begin{theorem}\label{characterized Omega}
Let $F$ be an automorphism of $\C^2$ of the form \eqref{automFp}. Then,
\[
\Omega=\{(z,w)\in \C^2\setminus\{(0,0)\}: \lim_{n\to \infty}\|(z_n,w_n)\|=0, \quad |z_n|\sim |w_n|\},
\]
where $(z_n,w_n)=F^n(z,w)$.
\end{theorem}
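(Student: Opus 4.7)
The plan is to prove the two inclusions separately, using Corollary \ref{Cor:how-Omega-is} to transfer the characterization to a membership question for the sets $W(\beta)$, which is purely a statement about the moduli of the coordinates.

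For the inclusion $\Omega\subseteq\{\cdots\}$: if $(z,w)\in\Omega$, then by definition $F^{n_0}(z,w)\in B$ for some $n_0$, and since $B$ is forward $F$-invariant by Theorem \ref{Thm:BZ}, one has $(z_n,w_n)\in B\subset W(\beta_0)$ for all $n\geq n_0$. Then Lemma \ref{go-good-down}(2) gives $|z_n|\sim n^{-1/2}\sim|w_n|$, so in particular $\|(z_n,w_n)\|\to 0$ and $|z_n|\sim|w_n|$.

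For the reverse inclusion: suppose $(z,w)\neq(0,0)$ satisfies $\|(z_n,w_n)\|\to 0$ and $|z_n|\sim|w_n|$. Because $F$ is an automorphism fixing $(0,0)$, we have $(z_n,w_n)\neq(0,0)$ for every $n$, and the equivalence $|z_n|\sim|w_n|$ then forces $z_n\neq 0$ and $w_n\neq 0$ for all $n$. Pick $\beta\in(0,1/2)$ with $\beta(l+1)>2$. By Corollary \ref{Cor:how-Omega-is} it suffices to prove $(z_n,w_n)\in W(\beta)$ eventually, i.e.\ $|z_n|<|z_nw_n|^\beta$ and $|w_n|<|z_nw_n|^\beta$ for large $n$. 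From $|z_n|\sim|w_n|$ one gets a constant $c>0$ with $|z_nw_n|\geq c\max(|z_n|,|w_n|)^2$, so
\[
\frac{|z_nw_n|^\beta}{|z_n|}\geq c^\beta|z_n|^{2\beta-1}\cdot\left(\tfrac{|w_n|}{|z_n|}\right)^{\!\beta}\longrightarrow+\infty
\]
as $n\to\infty$, since $2\beta-1<0$ and $|z_n|\to 0$ while $|w_n|/|z_n|$ stays bounded above and below. The same argument works with $z_n$ and $w_n$ interchanged, yielding $(z_n,w_n)\in W(\beta)$ eventually. Corollary \ref{Cor:how-Omega-is} then gives $(z,w)\in\Omega$.

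I do not expect any serious obstacle here: the work has already been done in Lemma \ref{go-good-down} and its corollary, which convert membership in $\Omega$ into the geometric condition of lying eventually in $W(\beta)$. The only thing to check is the elementary observation that the two-sided comparability $|z_n|\sim|w_n|$ together with $|z_n|,|w_n|\to 0$ implies $(z_n,w_n)\in W(\beta)$ for any $\beta<1/2$ and large $n$, which follows from $2\beta-1<0$. The mild care point is guaranteeing $z_n,w_n\neq 0$ so that $W(\beta)$ is the right target, and this is automatic since $F$ is an automorphism fixing the origin.
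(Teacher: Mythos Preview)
Your proof is correct and follows essentially the same route as the paper: both directions rest on Lemma~\ref{go-good-down} and Corollary~\ref{Cor:how-Omega-is}, and the only computation is the elementary fact that $|z_n|\sim|w_n|\to 0$ forces $(z_n,w_n)\in W(\beta)$ eventually for any $\beta<1/2$. The paper phrases that last step as $|z_n|^{(1-\beta)/\beta}<c_1|z_n|<|w_n|$, which is the same estimate you give; your displayed inequality is slightly awkward (the factor $c^\beta$ is superfluous once you write $|z_nw_n|^\beta/|z_n|=|z_n|^{2\beta-1}(|w_n|/|z_n|)^\beta$ directly), but the conclusion is unaffected.
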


\begin{proof} If $(z,w)\in \Omega$, then eventually $(z_n,w_n)\in W(\beta_0)$ and, hence, $|z_n|\sim |w_n|$ by Lemma~\ref{go-good-down}. On the other hand, if $(z_n,w_n)\to (0,0)$ and $|z_n|\sim |w_n|$, it follows that for every $\beta\in (0,1/2)$, $(z_n,w_n)\in W(\beta)$. Indeed, let $0<c_1<c_2$ be such that $c_1 |z_n|<|w_n|<c_2 |z_n|$ eventually. Let $\beta\in (0,1/2)$. Then for $n$ large,
\[
|z_n|^{\frac{1-\beta}{\beta}}<c_1 |z_n|<|w_n|,
\]
that is, $|z_n|<|u_n|^\beta$, and similarly it can be proved that $|w_n|<|u_n|^\beta$. Hence, by Corollary \ref{Cor:how-Omega-is}, $(z,w)\in \Omega$.
\end{proof}

In order to show that, under some generic arithmetic assumptions on $\lambda$, $\Omega$ coincides with the Fatou component which contains it, we need to prove some preliminary results.

\begin{lemma}\label{Change-of-coordinates}
Let $\chi$ be a germ of biholomorphism of $\C^2$ at $(0,0)$ given by
\[
\chi(z,w)=(z+A(z,w), w+B(z,w)),
\]
where $A$ and $B$ are germs of holomorphic functions at $(0,0)$ with $A(z,w)=O(\|(z,w)\|^h)$ and $B(z,w)=O(\|(z,w)\|^h)$ for some $h\geq 2$. Let $\beta\in (0,1/2)$. Assume that $\beta(h+1)>1$.
Then for any $\beta'\in (0,\beta)$ there exists $\epsilon>0$ such that for every $(z,w)\in W(\beta)$ with $\|(z,w)\|<\epsilon$ it holds $\chi(z,w)\in W(\beta')$.
\end{lemma}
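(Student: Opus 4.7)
The plan is to prove the statement by direct estimation, exploiting the fact that on $W(\beta)$ the norm $\|(z,w)\|$ is controlled by $|zw|^{\beta}$, so that the perturbation terms $A, B$ of order $\|(z,w)\|^h$ become super-linear powers of $|zw|$.

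First I would observe that for $(z,w)\in W(\beta)$ one has $\|(z,w)\|^2 \le 2\max(|z|,|w|)^2 \le 2|zw|^{2\beta}$, hence
\[
\|(z,w)\|\le \sqrt{2}\,|zw|^{\beta}.
\]
In particular, since $2\beta<1$, the condition $|z|<|zw|^{\beta}$ and $|w|<|zw|^{\beta}$ forces $|zw|^{1-2\beta}<1$, so shrinking $(z,w)$ forces $|zw|\to 0$. From the hypothesis $A,B=O(\|(z,w)\|^h)$ one then obtains, for some constant $C>0$ (independent of $(z,w)$) and all $(z,w)\in W(\beta)$ sufficiently close to the origin,
\[
|A(z,w)|\le C|zw|^{\beta h}, \qquad |B(z,w)|\le C|zw|^{\beta h}.
\]

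Next I would estimate the two components of $\chi(z,w)=(z',w')$ separately. The bound $|z'|\le |z|+|A(z,w)|\le |zw|^{\beta}+C|zw|^{\beta h}$ and similarly for $|w'|$, combined with $\beta h\ge 2\beta>\beta$, yields $|z'|\le |zw|^{\beta}(1+o(1))$ and $|w'|\le |zw|^{\beta}(1+o(1))$ as $\|(z,w)\|\to 0$. For the product, expand
\[
z'w'=zw+zB(z,w)+wA(z,w)+A(z,w)B(z,w),
\]
and note that each of the last three terms is bounded in modulus by a constant times $|zw|^{\beta(h+1)}$ (using $|zw|\le 1$ to dominate $|zw|^{2\beta h}$ by $|zw|^{\beta(h+1)}$, since $h\ge 2\ge 1$). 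Because $\beta(h+1)>1$, these terms are $o(|zw|)$, so for $(z,w)$ small enough one has
\[
\tfrac{1}{2}|zw|\le |z'w'|\le 2|zw|.
\]

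Finally I would assemble the two estimates. The inequality $|z'|<|z'w'|^{\beta'}$ to be verified reduces, using the above bounds, to
\[
|zw|^{\beta}(1+o(1))<(\tfrac{1}{2}|zw|)^{\beta'},
\]
that is to $|zw|^{\beta-\beta'}<2^{-\beta'}(1+o(1))^{-1}$. Since $\beta-\beta'>0$ and $|zw|\to 0$, this holds for all $(z,w)\in W(\beta)$ with $\|(z,w)\|<\epsilon$ for a suitable $\epsilon>0$; the analogous estimate for $|w'|$ is obtained in the same way. No step presents a real obstacle: the only thing to keep track of is the interplay between the two powers $\beta h$ and $\beta(h+1)$, and the hypothesis $\beta(h+1)>1$ is used precisely to guarantee that $|z'w'|\sim |zw|$ so that the loss from $\beta$ to any $\beta'<\beta$ absorbs the perturbation.
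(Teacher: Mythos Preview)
Your proof is correct and follows essentially the same approach as the paper's: bound $|A|,|B|$ by $C|zw|^{\beta h}$ on $W(\beta)$, estimate $|z'|,|w'|\le |zw|^\beta(1+o(1))$ and $|z'w'|\ge |zw|(1-o(1))$ using $\beta(h+1)>1$, then absorb the multiplicative error by passing from $\beta$ to any smaller $\beta'$. The paper writes the last step as $|\tilde z|<|\tilde z\tilde w|^\beta(1+o(1))<|\tilde z\tilde w|^{\beta'}$ rather than isolating the explicit constant $2^{-\beta'}$, but the argument is the same.
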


\begin{proof}
Let us write $(\tilde{z}, \tilde{w})=\chi(z,w)$. Then we have $\tilde{z}=z+A(z,w)$ and $\tilde{w}=w+B(z,w)$.

Fix $r>0$, $\beta\in (0,1/2)$ such that $\beta(h+1)>1$, and $\beta'\in (0,\beta)$. By definition, for $\|(z,w)\|<r$, if $(z,w)\in W(\beta)$, then there exists a constant $C>0$ such that $|A(z,w)|\leq C |zw|^{\beta h}$ and $|B(z,w)|\leq C|zw|^{\beta h}$. Hence, for all $(z,w)\in W(\beta)$ with $\|(z,w)\|<r$,
\[
|\tilde{z}|\leq |z|+|A(z,w)|<|zw|^\beta+C |zw|^{\beta h}=|zw|^\beta(1+o(|zw|^{\beta(h-1)})), 
\]
and similarly, $|\tilde{w}|<|zw|^\beta(1+o(|zw|^{\beta(h-1)}))$. Therefore, since $\beta(h+1)>1$,
\begin{equation*}
\begin{split}
|\tilde z \tilde w|&\geq |zw|-|z||B|-|w||A|-|AB|\\&
\geq |zw|-2C|zw|^{\beta (h+1)}-C^2|zw|^{2 h\beta}\\
&= |zw|(1+o(|zw|^{\beta(h+1)-1})).
\end{split}
\end{equation*}
It thus follows that, for $(z,w)\in W(\beta)$ sufficiently close to $(0,0)$, we have
\[
|\tilde{z}|<|zw|^\beta(1+o(|zw|^{\beta(h-1)}))\leq |\tilde z \tilde w|^{\beta}\frac{1+o(|zw|^{\beta(h-1)})}{1+o(|zw|^{\beta(h+1)-1})}\leq |\tilde z \tilde w|^{\beta}(1+o(1))<|\tilde z \tilde w|^{\beta'}.
\]
A similar argument holding for $\tilde{w}$, the statement is proved.
\end{proof}

\begin{remark}
Note that the previous lemma does not hold without the hypothesis $\beta(h+1)>1$. Consider for instance the holomorphic map $\chi(z,w)=(z+w^2, w)$. Then the points of the form $(-w^2,w)$ belong to $W(\beta)$ for all $\beta<1/3$ but $\chi(-w^2,w)=(0,w)\not\in W(\beta')$ for any $\beta'\in (0,1/2)$.
\end{remark}

To state and prove Theorem \ref{Fatou-Omega} we also need one more assumption, namely an arithmetic condition on the eigenvalue $\lambda$.

Let $\lambda\in \C$ be such that $|\lambda|=1$. Recall that $\lambda$ is called {\sl Siegel}  if there exist $c>0$ and $N\in \N$ such that $|\lambda^k-1|\geq c k^{-N}$ for all $k\in \N$, $k\geq 1$ (such a condition holds for $\theta$ in a full Lebesgue measure subset of the unit circle, see, {\sl e.g.}, \cite{Po}). More generally, one says that a number $\lambda$ is {\sl Brjuno} if
\begin{equation}\label{eq:brjuno}
\sum_{k=0}^{+\infty}{\frac{1}{2^k}}\log{\frac{1}{\omega(2^{k+1})}}<+\infty\;,
\end{equation}
where $\omega(m) = \min_{2\le k\le m} |\lambda^k - \lambda|$ for any $m\ge 2$. Roughly speaking, 
the logarithm of a Brjuno number is badly approximated by rationals (see \cite{Brjuno} or \cite{Po}  for more details). Siegel numbers are examples of Brjuno numbers. 

\begin{lemma}\label{Brunocoord}
Let $F$ be given by \eqref{automFp-local}. If $\lambda$ is Brjuno, then there exists a germ of biholomorphism $\chi$ of $\C^2$ at $(0,0)$ of the form $\chi(z,w)=(z,w)+O(\|(z,w)\|^l)$, such that 
\begin{equation}\label{F-bruno}
\tilde F(\tilde z, \tilde w):=(\chi \circ F \circ \chi^{-1})(\tilde z,\tilde w)=(\lambda \tilde z  +  \tilde z\tilde w A(\tilde z, \tilde w), \overline{\lambda} \tilde w + \tilde z\tilde w B(\tilde z, \tilde w)),
\end{equation}
where $A, B$ are germs of holomorphic functions at $(0,0)$.
\end{lemma}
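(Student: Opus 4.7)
The plan is to recognize the statement as a Poincaré--Dulac normalization result for the one-resonant germ $F$ and to invoke the convergent normalization theorem of Pöschel \cite{Po} under the Brjuno hypothesis. First I would compute the resonances of the linear part $\mathrm{diag}(\lambda,\overline{\lambda})$: since $\lambda$ is Brjuno (hence not a root of unity), a monomial $z^{m_1}w^{m_2}$ with $m_1+m_2\ge 2$ is resonant with $\lambda$ iff $\lambda^{m_1-m_2-1}=1$, which forces $m_1-m_2=1$, and is resonant with $\overline{\lambda}$ iff $m_1-m_2=-1$. Thus the resonant monomials are precisely $z(zw)^k$ in the first component and $w(zw)^k$ in the second ($k\ge 1$). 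A direct inspection shows that $F_N$ is already in Poincaré--Dulac normal form, since $\lambda z(1-zw/2)=\lambda z-(\lambda/2)\,z(zw)$ and analogously for the other coordinate.

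Next I would apply Pöschel's theorem from \cite{Po}: under the Brjuno condition on $\lambda$, there exists a germ of biholomorphism $\chi$ at the origin, tangent to the identity, such that every non-linear monomial of $\chi\circ F\circ\chi^{-1}$ is resonant. Combined with the classification of resonant monomials above, this gives
\[
\chi\circ F\circ\chi^{-1}(\tilde z,\tilde w)=\bigl(\lambda\tilde z+\tilde z\tilde w\,A(\tilde z,\tilde w),\ \overline{\lambda}\tilde w+\tilde z\tilde w\,B(\tilde z,\tilde w)\bigr)
\]
for germs of holomorphic functions $A,B$ at $(0,0)$ (in fact depending only on the product $\tilde z\tilde w$), which is precisely the form \eqref{F-bruno}.

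The remaining point is to arrange $\chi(z,w)=(z,w)+O(\|(z,w)\|^l)$. The standard order-by-order construction solves, at each degree $k\ge 2$, a homological equation of the form $\mathcal L h_k=\Pi_{\mathrm{nr}}P_k$, where $h_k$ is the sought homogeneous degree-$k$ part of $\chi$, $P_k$ is the order-$k$ obstruction produced by the current truncation of the conjugation, and $\Pi_{\mathrm{nr}}$ is the projection onto non-resonant monomials. By \eqref{automFp-local}, $F$ agrees with $F_N$ up to order $l-1$, and $F_N$ is already in normal form; hence the obstructions $P_k$ vanish for $2\le k\le l-1$, and one may take $h_k=0$ at those orders. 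Thus $\chi$ is automatically tangent to the identity to order $l-1$.

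The only delicate ingredient is the convergence of the formal series defining $\chi$, which is where the Brjuno condition enters through the small-divisor estimates; I expect this to be the main technical obstacle, but it is precisely the content of Pöschel's theorem, so no further analytic work beyond citing \cite{Po} is required.
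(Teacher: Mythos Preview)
Your argument has a genuine gap: what you attribute to P\"oschel's theorem is not what \cite{Po} proves. The result in \cite{Po} is an \emph{invariant manifold} theorem, not a convergent Poincar\'e--Dulac normalization. Concretely, under a Brjuno-type admissibility condition on a subset of the eigenvalues, P\"oschel produces a holomorphic $F$-invariant manifold tangent to the corresponding eigenspace, on which $F$ is linearized; it does \emph{not} assert that the full formal normalizing transformation converges. In fact, convergence of the complete Poincar\'e--Dulac normalization for one-resonant elliptic germs is not known in this generality (and is generally expected to fail without further structural hypotheses on the normal form), so you cannot simply cite \cite{Po} for it. Your formal computation of the resonances and of the vanishing of the low-order obstructions is fine, but the convergence step---which you correctly identify as the delicate one---is not available.

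The paper sidesteps this entirely. It does not attempt full normalization; it only needs the conclusion that both coordinate axes are $\tilde F$-invariant, which is exactly what the shape $(\lambda\tilde z+\tilde z\tilde w A,\ \overline{\lambda}\tilde w+\tilde z\tilde w B)$ encodes (with $A,B$ arbitrary germs, not necessarily functions of $\tilde z\tilde w$ alone). To obtain this, the paper applies P\"oschel's theorem twice, once to the eigenvalue $\lambda$ and once to $\overline{\lambda}$, to get two $F$-invariant holomorphic discs $\{w=\psi_1(z)\}$ and $\{z=\psi_2(w)\}$ through the origin, tangent to the axes to order $l$ (this high tangency comes from the fact that $F=F_N+O(\|\cdot\|^l)$, via the construction in \cite{Po}). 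The explicit change of variables $\chi(z,w)=(z-\psi_2(w),\,w-\psi_1(z))$ then straightens these two curves to the coordinate axes, and since $F$ acts as a rotation on each, the conjugate $\tilde F$ has the required form. The tangency $\chi={\sf id}+O(\|\cdot\|^l)$ is immediate from $\psi_j=O(|\cdot|^l)$.

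So the fix is not to refine your small-divisor estimates but to aim for less: drop the full normal form and use P\"oschel's invariant curves plus the elementary coordinate change above.
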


\begin{proof}
Thanks to the fact that $\lambda$ is Brjuno, the divisors $\lambda^k-\lambda$ and $\lambda^k-\overline{\lambda}$ are ``admissible'' in the sense of P\"oschel \cite{Po} for all $k\in \N$, $k\geq 2$. Hence, by  \cite[Theorem 1]{Po}, there exist $\delta>0$ and an injective holomorphic map $\varphi_1:\D_\delta \to \C^2$, where $\D_\delta:=\{\zeta\in \C: |\zeta|<\delta\}$, such that $\varphi_1(0)=(0,0)$, $\varphi_1'(0)=(1,0)$ and 
\begin{equation}\label{poschel1}
F(\varphi_1(\zeta))=\varphi_1(\lambda\zeta),
\end{equation}  
for all $\zeta\in \D_\delta$.  Since $F$ is tangent to $\{w=0\}$ up to order $l$, if follows from the proof of \cite[Theorem 1]{Po} that $\varphi_1$ can be chosen of the form $\varphi_1(\zeta)=(\zeta,0)+O(|\zeta|^l)$. In particular, up to shrinking $\delta$, we can write $\varphi_1(\D_\delta)$ implicitly as $w=\psi_1(z)$ for some holomorphic function $\psi_1$ defined on $\D_\delta$ and such that $\psi_1(\zeta)=O(|\zeta|^l)$.

Similarly, $\overline{\lambda}^k-\lambda$ and $\overline{\lambda}^k-\overline{\lambda}$ are admissible divisors in the sense of P\"oschel for all $k\in \N$, $k\geq 2$ and hence there exist $\delta'>0$ and a holomorphic function $\psi_2:\D_{\delta'}\to \C$ with $\psi_2(\zeta)=O(|\zeta|^l)$, such that $F$ leaves invariant the local curve $C:=\{(z,w): z=\psi_2(w)\}$ and the restriction of $F$ to $C$ is a $\overline{\lambda}$-rotation. 

We can therefore define $(\tilde z, \tilde w):=\chi(z,w)=(z-\psi_2(w), w-\psi_1(z))$. By construction, $\chi$ is a germ of biholomorphism at $(0,0)$ and $\chi(z,w)=(z,w)+O(\|(z,w)\|^l)$. Moreover, the conjugate germ $\tilde F(\tilde z, \tilde w):=(\chi \circ F \circ \chi^{-1})(\tilde z,\tilde w)$ satisfies our thesis. Indeed, $\tilde{z}=0$ corresponds to $z-\psi_2(w)=0$, and since $F$ leaves such a curve invariant and it is a $\overline{\lambda}$-rotation on it, it follows that $\tilde F(0, \tilde w)=(0,\overline{\lambda}\tilde w)$. A similar argument proves that $\tilde F( \tilde z, 0)=(\lambda \tilde z,0)$.
\end{proof}

The last ingredient in the proof of Theorem~\ref{Fatou-Omega} is the following  fact which can be easily proved via standard estimates:

\begin{lemma}\label{Lem:metric}
Let $\D^\ast=\{\zeta\in \C: 0<|\zeta|<1\}$. Let $k_{\D^\ast}$ denote the hyperbolic distance in $\D^\ast$. Let 
\[
g(\zeta,\xi):=2\pi \max\left\{-\frac{1}{\log |\zeta|},-\frac{1}{\log |\xi|}\right\}.
\]
Then for all $\zeta, \xi\in \D^\ast$ it holds
\[
\left|\log\frac{\log|\zeta|}{\log|\xi|} \right|- g(\zeta,\xi)\leq k_{\D^\ast}(\zeta, \xi)\leq \left|\log\frac{\log|\zeta|}{\log|\xi|} \right|+g(\zeta,\xi).
\]
\end{lemma}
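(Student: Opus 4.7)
The plan is to use the universal holomorphic covering $\pi\colon\H\to\D^\ast$ given by $\pi(z)=e^{iz}$, under which the hyperbolic metric $|dz|/\Im z$ on $\H$ pulls back to $\frac{|d\zeta|}{|\zeta|\,\log(1/|\zeta|)}$ on $\D^\ast$. Since $\pi$ is a covering,
\[
k_{\D^\ast}(\zeta,\xi)=\inf\bigl\{k_\H(z_1,z_2):\pi(z_1)=\zeta,\ \pi(z_2)=\xi\bigr\}.
\]
Writing $\zeta=|\zeta|e^{i\theta_\zeta}$, $\xi=|\xi|e^{i\theta_\xi}$ and setting $y_\zeta:=-\log|\zeta|>0$, $y_\xi:=-\log|\xi|>0$ (so that $|\log(y_\zeta/y_\xi)|=\bigl|\log\tfrac{\log|\zeta|}{\log|\xi|}\bigr|$), the fibers of $\pi$ are $\theta_\zeta+2\pi k+iy_\zeta$, $k\in\Z$, and analogously for $\xi$; the deck transformations act by horizontal translation by multiples of $2\pi$. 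Using the standard distance formula $\cosh k_\H(z_1,z_2)=1+\frac{|z_1-z_2|^2}{2\,\Im z_1\,\Im z_2}$ and the fact that it is monotone in the horizontal offset, one finds
\[
k_{\D^\ast}(\zeta,\xi)=\operatorname{arcosh}\!\left(1+\frac{\delta^2+(y_\zeta-y_\xi)^2}{2y_\zeta y_\xi}\right),
\]
where $\delta$ is the representative of $\theta_\zeta-\theta_\xi$ modulo $2\pi$ with $|\delta|\le \pi$.

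For the lower bound, since $\delta^2\ge 0$ and $\operatorname{arcosh}$ is increasing,
\[
k_{\D^\ast}(\zeta,\xi)\ge\operatorname{arcosh}\!\left(1+\frac{(y_\zeta-y_\xi)^2}{2y_\zeta y_\xi}\right)=|\log(y_\zeta/y_\xi)|,
\]
which is already stronger than the claimed lower bound because $g(\zeta,\xi)\ge 0$. For the upper bound, rather than manipulating the $\operatorname{arcosh}$ formula, I would exhibit an explicit admissible path in $\D^\ast$ from $\zeta$ to $\xi$, namely: the shorter circular arc along $\{|w|=|\zeta|\}$ from $\zeta$ to the positive real point $|\zeta|$; then the radial segment along the positive real axis from $|\zeta|$ to $|\xi|$; then the shorter circular arc along $\{|w|=|\xi|\}$ from $|\xi|$ to $\xi$. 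A direct integration against $\frac{|d\zeta|}{|\zeta|\log(1/|\zeta|)}$, using the substitution $u=\log(1/r)$ on the radial piece, shows that the radial segment has hyperbolic length exactly $|\log(y_\zeta/y_\xi)|$, while a circular arc of angular length at most $\pi$ at radius $r$ has hyperbolic length at most $\pi/(-\log r)$. Summing the three contributions,
\[
k_{\D^\ast}(\zeta,\xi)\le |\log(y_\zeta/y_\xi)|+\pi/y_\zeta+\pi/y_\xi\le |\log(y_\zeta/y_\xi)|+2\pi\max\{1/y_\zeta,1/y_\xi\}=|\log(y_\zeta/y_\xi)|+g(\zeta,\xi),
\]
which is the desired upper bound.

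There is no serious obstacle here; the argument is essentially a bookkeeping exercise on the universal cover. The only point requiring care is the metric normalization: the factor $2\pi$ in the definition of $g$ is precisely what comes out from going halfway around each circle $\{|w|=|\zeta|\}$ and $\{|w|=|\xi|\}$ with the metric $\frac{|d\zeta|}{|\zeta|\log(1/|\zeta|)}$, so the normalization is forced to be the one corresponding to $|dz|/\Im z$ on $\H$ (equivalently, $k_\D(0,r)=\log\frac{1+r}{1-r}$).
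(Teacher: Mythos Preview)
Your argument is correct. The paper itself gives no proof of this lemma, introducing it only as ``the following fact which can be easily proved via standard estimates,'' so there is no approach to compare against; your use of the universal cover $\H\to\D^\ast$, $z\mapsto e^{iz}$, together with the explicit arc--radial--arc path for the upper bound, is precisely the kind of standard computation the authors had in mind, and your remark on the curvature~$-1$ normalization (forced by the constant $2\pi$ in $g$) is exactly right.
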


Now we are in a good shape to state and prove the main result of this section:

\begin{theorem}\label{Fatou-Omega}
Let $F$ be an automorphism of $\C^2$ of the form \eqref{automFp}. If $\lambda$ is Brjuno, then $\Omega=V$.
\end{theorem}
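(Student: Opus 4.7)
The plan is to argue by contradiction: assume $V \setminus \Omega \neq \emptyset$ and derive an incompatibility between the Kobayashi-decreasing property under $F$ and the modulus obstructions built into the shape of $B$. First I would invoke Lemma \ref{Brunocoord} to obtain local coordinates at the origin, defined on a ball $\B$ that contains $B$, in which $F$ preserves $\{z=0\}$ and $\{w=0\}$ and acts on each as an irrational rotation. Since the iterates converge to $(0,0)$ on $V$ but not on these invariant discs, neither disc can meet $V$ away from the origin; in particular $V$ lies locally in $\B_\ast \cup \{(0,0)\}$, where $\B_\ast := \B \setminus \{zw=0\}$. Now pick $p_0 \in \Omega$, $q_0 \in V \setminus \Omega$, and a connected open set $Z \subset V$ with $\overline Z \subset V$ containing both; uniform convergence $F^n|_{\overline Z} \to (0,0)$ together with $F$-invariance of $\{zw=0\}$ (and $Z \cap \{zw=0\} = \emptyset$) allows us, after replacing $F$ by a suitably high iterate, to assume that $W := \bigcup_{n\in\N} F^n(Z)$ is a forward-invariant domain inside $\B_\ast$.

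By connectedness of $Z$, for any $\delta > 0$ we can pick $p \in Z \cap \Omega$ and $q \in Z \cap (V \setminus \Omega)$ with $k_Z(p,q) < \delta$. The decreasing property of the Kobayashi (pseudo)distance under $F : W \to W$ and under $W \hookrightarrow \B_\ast$ then gives $k_{\B_\ast}(F^n(p), F^n(q)) < \delta$ for every $n \in \N$; projecting by the holomorphic maps $\pi_1, \pi_2 : \B_\ast \to \D^\ast$ yields
$$k_{\D^\ast}(z_n, x_n) < \delta, \qquad k_{\D^\ast}(w_n, y_n) < \delta,$$
where $(z_n,w_n) := F^n(p)$ and $(x_n, y_n) := F^n(q)$. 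Since $p \in \Omega$, eventually $(z_n,w_n) \in B$, so Lemma \ref{go-good-down} gives $|z_n| \sim |w_n| \sim n^{-1/2}$, hence $\log|z_n|/\log|w_n| \to 1$, and by Lemma \ref{Lem:metric} one gets $k_{\D^\ast}(z_n, w_n) \to 0$.

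The pivotal dichotomy for $q$ is the following. If $(x_n, y_n) \in W(\beta_0)$ for all large $n$, Lemma \ref{go-good-down} would, using $\beta_0(l+1) \geq 4$ and $F^n(q) \to (0,0)$, force $(x_n, y_n) \in B$ eventually, contradicting $q \notin \Omega$. Hence there is a subsequence $\{n_k\}$ along which $(x_{n_k}, y_{n_k}) \notin W(\beta_0)$, which translates to
$$\frac{\log|x_{n_k}|}{\log|y_{n_k}|} \notin \left(\frac{\beta_0}{1-\beta_0}, \frac{1-\beta_0}{\beta_0}\right),$$
so $\bigl|\log(\log|x_{n_k}|/\log|y_{n_k}|)\bigr| \geq \log\bigl((1-\beta_0)/\beta_0\bigr) =: 2C_{\beta_0} > 0$. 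The lower bound of Lemma \ref{Lem:metric}, combined with $g(x_{n_k}, y_{n_k}) \to 0$ as $|x_{n_k}|, |y_{n_k}| \to 0$, gives $k_{\D^\ast}(x_{n_k}, y_{n_k}) \geq C_{\beta_0}$ for large $k$. But the triangle inequality together with the bounds above yields $k_{\D^\ast}(x_n, y_n) \leq 2\delta + k_{\D^\ast}(z_n, w_n) \to 2\delta$; picking $\delta < C_{\beta_0}/3$ at the outset delivers the desired contradiction.

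I expect the trickiest point to be the initial setup ensuring $W \subset \B_\ast$: this depends on the Brjuno hypothesis producing two local $F$-invariant analytic discs that can be straightened to $\{z=0\}$ and $\{w=0\}$, so that their union is a genuinely $F$-invariant analytic set that can be dynamically avoided for all positive times. A secondary subtlety is the dichotomy used to extract the subsequence with $(x_{n_k}, y_{n_k}) \notin W(\beta_0)$: it relies crucially on Lemma \ref{go-good-down}(3), ensuring that orbits landing in $W(\beta_0)$ are automatically drawn into $B$, so that failing to be in $B$ forces failure of the $W(\beta_0)$ condition itself.
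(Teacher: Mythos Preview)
Your proposal is correct and follows essentially the same route as the paper's proof: contradiction via Kobayashi-distance estimates in $\D^\ast$ after passing to the P\"oschel--Brjuno coordinates of Lemma~\ref{Brunocoord}. The one technical point you elide is that, having changed coordinates, you apply the $W(\beta_0)$ dichotomy and Lemma~\ref{go-good-down} in the \emph{new} coordinates while $\Omega$ and $B$ are defined in the original ones; the paper bridges this explicitly with Lemma~\ref{Change-of-coordinates}, which guarantees that the condition ``$(x_n,y_n)\in W(\beta)$ eventually'' survives the change $\chi=\mathrm{id}+O(\|\cdot\|^l)$, and you should invoke it (or the equivalent observation that $|\tilde z_n|\sim|z_n|$ along orbits tending to the origin) to close that step.
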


\begin{proof}
Assume by contradiction that the statement is not true. Hence, there exists $q_0\in V\setminus \Omega$. Let $p_0\in \Omega$, and let $Z$ be an open connected set containing $p_0$ and $q_0$ and such that $\overline{Z}\subset V$. 

By Lemma \ref{Brunocoord}, since $\lambda$ is Brjuno, there exists an open neighborhood $U$ of $(0,0)$ and a biholomorphism $\chi:U \to \chi(U)$, such that \eqref{F-bruno} holds for all $(\tilde z,\tilde w)\in \chi(U)$. Up to rescaling, we can assume that 
\[
\B^2:=\{(\tilde z,\tilde w)\in \C^2: |\tilde z|^2+|\tilde w|^2<1\}\subset \chi(U).
\]
Since $\{F^n\}$ converges uniformly to $(0,0)$ on $\overline{Z}$, up to replacing $F$ with $F^m$ for some fixed $m\in \N$, we may assume that $Q:=\cup_{n\in \N}F^n(Z)$ satisfies $\tilde Q:=\chi(Q)\subset \B^2$.

The axes $\tilde z$ and $\tilde w$ are $\tilde F$-invariant and $\tilde F$ is a rotation once restricted to the axes, therefore 
\[
\tilde Q\subset \B^2_\ast:=\B^2\setminus(\{\tilde z=0\}\cup \{\tilde w=0\}).
\]

Given a complex manifold $M$, we denote by $k_M$ its Kobayashi distance.  By construction, for every $\delta>0$,  one can find $p\in Z\cap \Omega$ and $q\in Z\cap (V\setminus \Omega)$ such that $k_Q(p,q)\leq k_Z(p,q)<\delta$. Let $\tilde p:=\chi(p)$ and $\tilde q:=\chi(q)$. Hence, $k_{\tilde Q}(\tilde p, \tilde q)<\delta$. Thus, since $\tilde F(\tilde Q)\subset \tilde Q$ by construction, and $\tilde{Q}\subset \B^2_\ast$, it follows that for all $n\in \N$,
\begin{equation}\label{delta-sub}
k_{\B^2_\ast}(\tilde F^n(\tilde p), \tilde F^n(\tilde q))\leq  k_{\tilde Q}(\tilde F^n(\tilde p), \tilde F^n(\tilde q))<\delta.
\end{equation}
Now, since $q\not\in \Omega$, by Lemma \ref{go-good-down}, there is no $\beta\in (0,1/2)$ with $\beta(l+1)>2$ such that $\{F^n(q)\}\subset W(\beta)$ eventually. We claim that the same happens to $\{\tilde F^n (\tilde q)\}$. Indeed, if there existed $\beta\in (0,1/2)$ with $\beta(l+1)>2$ such that $\{\tilde F^n (\tilde q)\}\subset W(\beta)$ eventually, taking $\beta'\in (0,\beta)$ so that $\beta'(l+1)>2$, Lemma \ref{Change-of-coordinates} applied to $\chi^{-1}(\tilde z, \tilde w)=(\tilde z, \tilde w)+O(\|(\tilde z, \tilde w)\|^l)$ would imply that $\{F^n(q)\}\subset W(\beta')$ eventually, contradicting our assumption.

Therefore, fixing $\beta\in (0,1/2)$ with $\beta(l+1)>2$, we can assume, without loss of generality, that there exists an increasing subsequence $\{n_k\}\subset \N$ tending to $\infty$ such that, setting $(\tilde z_n(\tilde q),\tilde w_n(\tilde q)):=\tilde F^n(\tilde q)$, for all $n_k$ it holds $|\tilde z_{n_k}(\tilde q)|\geq |\tilde z_{n_k}(\tilde q)\tilde w_{n_k}(\tilde q)|^{\beta}$, that is
\begin{equation}\label{Eq:bad-go}
|\tilde w_{n_k}(\tilde q)|\leq |\tilde z_{n_k}(\tilde q)|^{\frac{1-\beta}{\beta}}.
\end{equation}
On the other hand, by Lemma \ref{go-good-down},  $\{F^n(p)\}\subset W(\beta)$ eventually for all $\beta\in (0,1/2)$ such that $\beta(l+1)>2$. Hence, by Lemma \ref{Change-of-coordinates}, it follows that $\{\tilde F^n(\tilde p)\}\subset W(\beta')$ eventually for all $\beta'\in (0,\beta)$. Since this holds for all $\beta\in (0,1/2)$ such that $\beta(l+1)>2$, we obtain that $\{\tilde F^n(\tilde p)\}\subset W(\beta)$ eventually. Therefore, again by Lemma \ref{go-good-down}, there exist $0<c<C$ and $\tilde n>0$ such that for all $n\ge \tilde n$ 
\begin{equation}\label{Eq:good-go}
c|\tilde z_n(\tilde p)|\leq |\tilde w_n(\tilde p)|\leq C |\tilde z_n(\tilde p)|.
\end{equation} 

Consider the holomorphic projections $\pi_1:\B^2_\ast\to \D^\ast$ given by $\pi_1(\tilde z, \tilde w)=\tilde z$, and $\pi_2:\B^2_\ast\to \D^\ast$ given by $\pi_2(\tilde z, \tilde w)=\tilde w$. By the properties of the Kobayashi distance, $k_{\D^\ast}(\pi_j(A), \pi_j(B))\leq k_{\B^2_\ast}(A, B)$ for every $A, B\in \B^2_\ast$. Hence, by \eqref{delta-sub}, for all $n_k$,
\begin{equation}\label{eq:stima delta}
k_{\D^\ast}(\tilde z_{n_k}(\tilde p), \tilde z_{n_k}(\tilde q))<\delta, \quad k_{\D^\ast}(\tilde w_{n_k}(\tilde p), \tilde w_{n_k}(\tilde q))<\delta.
\end{equation}
Thanks to \eqref{Eq:good-go} and Lemma \ref{Lem:metric}, since the orbit of $\tilde p$ converges to the origin, there exists $k_0\in \N$ such that for all $n_k\geq k_0$,
\[
k_{\D^\ast}(\tilde z_{n_k}(\tilde p), \tilde w_{n_k}(\tilde p))\leq \left|\log\frac{\log|\tilde z_{n_k}(\tilde p)|}{\log|\tilde w_{n_k}(\tilde p)|} \right|+g(\tilde z_{n_k}(\tilde p),\tilde w_{n_k}(\tilde p))<\delta.
\]
Hence, by \eqref{eq:stima delta} and the triangle inequality, for all $n_k\geq k_0$,  
\begin{equation}\label{eq:stimona1}
k_{\D^\ast}(\tilde z_{n_k}(\tilde q), \tilde w_{n_k}(\tilde p))\leq k_{\D^\ast}(\tilde z_{n_k}(\tilde q), \tilde z_{n_k}(\tilde p))+k_{\D^\ast}(\tilde z_{n_k}(\tilde p), \tilde w_{n_k}(\tilde p))<2\delta.
\end{equation}
On the other hand,  let $k_1\in \N$ be such that, for all $n_k\geq k_1$,
\[
g(\tilde z_{n_k}(\tilde q),\tilde w_{n_k}(\tilde q))<\delta,
\]
where $g$ is the function defined in Lemma \ref{Lem:metric}. By the same lemma and \eqref{Eq:bad-go}
\begin{equation}\label{eq:stimona2}
\begin{split}
k_{\D^\ast}(\tilde z_{n_k}(\tilde q), \tilde w_{n_k}(\tilde q))
&\geq \left|\log\frac{\log|\tilde z_{n_k}(\tilde q)|}{\log|\tilde w_{n_k}(\tilde q)|} \right|-g(\tilde z_{n_k}(\tilde q),\tilde w_{n_k}(\tilde q))\\ 
&\geq \log\left(\frac{\log|\tilde z_{n_k}(\tilde q)|^{\frac{1-\beta}{\beta}}}{\log|\tilde z_{n_k}(\tilde q)|} \right)-\delta= \log \frac{1-\beta}{\beta} -\delta.
\end{split}
\end{equation}
The triangle inequality, together with \eqref{eq:stimona1} and \eqref{eq:stimona2} yield that for $n_k\geq \max\{k_0, k_1\}$
\begin{equation*}
\begin{split}
k_{\D^\ast}(\tilde w_{n_k}(\tilde p), \tilde w_{n_k}(\tilde q))&\geq k_{\D^\ast}(\tilde z_{n_k}(\tilde q), \tilde w_{n_k}(\tilde q))-k_{\D^\ast}(\tilde z_{n_k}(\tilde q), \tilde w_{n_k}(\tilde p))\\&\geq \log \frac{1-\beta}{\beta} -3\delta.
\end{split}
\end{equation*}
Therefore, by \eqref{eq:stima delta}, 
\[
4\delta\geq \log \frac{1-\beta}{\beta},
\]
giving a contradiction since $\frac{1-\beta}{\beta}>1$ is fixed and $\delta>0$ is arbitrary.
\end{proof}

\section{The proof of Theorem \ref{main} for~$k=2$}

Let $F$ be an automorphism of the form \eqref{automFp}, and assume that $\lambda$ is Brjuno. By Theorem~\ref{Fatou-Omega},  $\Omega$ is an invariant attracting Fatou component at $(0,0)$ and $\Omega$ is biholomorphic to $\C\times \C^\ast$ by Proposition~\ref{CxCstar}.

\section{The case~$k\ge 3$}

In the general case, $k\ge 3$, we start with a germ of biholomorphism of $\C^k$ at the origin of the form
\begin{equation}\label{form-intro-gen}
F_N(z_1,\dots,z_k)=\left(\lambda_1 z_1\left(1 - \frac{z_1\cdots z_k}{k}\right), \dots,
\lambda_k z_k\left(1 - \frac{z_1\cdots z_k}{k}\right)\right),
\end{equation}
where 
\begin{enumerate}
\item each $\lambda_j\in \C$, $|\lambda_j|=1$, is not a root of unity for $j=1,\dots, k$,
\item the $k$-tuple $(\lambda_1,\dots, \lambda_k)$ is {\em one-resonant with index of resonance $(1,\dots,1)\in\N^k$} in the sense of \cite[Definition 2.3]{BZ}, that is all the resonances $\lambda_j - \lambda_1^{m_1}\cdots\lambda_k^{m_k}=0$, for $j=1,\dots, k$, are precisely of the form $\lambda_j = \lambda_j\cdot\left(\lambda_1\cdots\lambda_n\right)^{k}$ for some $k\ge 1$,
\item the $k$-tuple $(\lambda_1,\dots, \lambda_k)$ is {\em admissible} in the sense of P\"oschel (see \cite{Po}), that is we have
\[
\sum_{n=0}^{+\infty}{\frac{1}{2^n}}\log{\frac{1}{\omega_j(2^{n+1})}}<+\infty\;,~\hbox{for}~j=1,
\dots, k
\]
where $\omega_j(m) = \min_{2\le h\le m} \min_{1\le i\le k}|\lambda_j^h - \lambda_i|$ for any $m\ge 2$.

\end{enumerate}

Thanks to a result of  B. J.  Weickert \cite{W1} and F. Forstneri\v{c} \cite{F}, for any large $l\in \N$ there exists an automorphism $F$ of $\C^k$ such that 
\begin{equation}\label{Eq-motiv}
F(z_1,\dots, z_k)-F_N(z_1,\dots, z_k)=O(\|(z_1,\dots, z_k)\|^l).
\end{equation}
Moreover, thanks to \cite[Theorem 1.1]{BZ}, given $\beta\in (0,\frac{1}{k})$ and $l\in\N$, $l\ge 4$ such that $\beta(l+1)\ge 4$, for every $\theta\in (0,\frac{\pi}{2})$, there is $R>0$ such that the open set 
\[
B:=\{(z_1,\dots, z_k)\in \C^k: u:= z_1\cdots z_k\in S(R,\theta), |z_j|<|u|^\beta~\hbox{for}~j=1,\dots, k\},
\]
is non-empty, forward invariant under $F$, the origin is on the boundary of $B$ and we have $\lim_{n\to \infty}F^n(p)=0$ for all $p\in B$, uniformly on compacta. Arguing as in Lemma~\ref{go-good-down} we obtain that for each $p\in B$, we have that $\lim_{n\to \infty} nu_n
=1$ and $|\pi_j(F^n(p))|\sim n^{-1/k}$, for $j=1,\dots, k$, where $\pi_j$ is the projection on the $j$th coordinate. Moreover, the analogue of the statement of Proposition~\ref{BRZ} holds for $k\ge 3$ (see also \cite{BRZ}) allowing to define a local Fatou coordinate $\psi \colon B\to \C$ such that $\psi\circ F = \psi+1$ with the required properties.

Now we need $k-1$ other local coordinates $\sigma_2, \dots, \sigma_{k}$. For $2\le j\le k$, $\sigma_j\colon B\to \C$ is defined as the uniform limit on compacta of the sequence $\{\sigma_{j,n}\}_n$ where 
\begin{equation*}
\sigma_{j,n}(z_1,\dots, z_k):= (\lambda_j\dots\lambda_k)^{-n} \Pi_j(F^n(z_1,\dots, z_k)) \exp\left({\frac{k-j+1}{k}\sum_{m=0}^{n-1} \frac{1}{\psi(z_1,\dots, z_k)+m}}\right),
\end{equation*}
and $\Pi_j\colon \C^k\to\C$ is defined as $\Pi_j(z_1,\dots, z_k) := z_j\cdots z_k$. The map $\sigma_j$ satisfies the functional equation
$$
\sigma_j\circ F = {\lambda_j\cdots\lambda_k} e^{-\frac{k-j+1}{k\psi}}\sigma_j.
$$

Let $\Omega:=\cup_{n\ge 0} F^{-n}(B)$. Arguing like in dimension $2$, one can prove that $H^{k-1}(\Omega,\C)\ne 0$. Using the functional equation we can extend $\psi$ to a map $g_1\colon \Omega \to \C$. Moreover, set $H:=g_1(B)$ and $\Omega_0:=g_1^{-1}(H)$. For $j=2,\dots, k$, we can extend $\sigma_j$ to $\Omega_0$ by setting, for any $p\in \Omega_0$,
$$
g_j(p) = (\lambda_j\cdots\lambda_k)^n \exp{\left(-\frac{k-j+1}{k}\sum_{m=0}^{n-1}\frac{1}{g_1(p)+j}\right)}\sigma_j(F^n(p))
$$ 
where $n\in\N$ is so that $F^n(p)\in B$.
As in dimension $2$, the map $\Omega_0\ni p\mapsto G(p):=(g_1(p),\dots, g_k(p))\in H\times \C^{k-1}$ is univalent with image $H\times (\C^*)^{k-1}$. In fact, we can use coordinates 
$$
(u, y_2, \dots, y_k) := (z_1\cdots z_k,z_2\cdots z_k, \dots, z_k),
$$
in $B$ so that we have 
$$
B = \{u\in S(R,\theta), |u|^{1-k\beta}<|y_k|<|u|^\beta,~|u|^{1-j\beta}<|y_j|<|u|^\beta |y_{j+1}|~\hbox{for}~j=2,\dots, k-1\}.
$$
Following the proof of Proposition~\ref{HxCstar}, since, for $p\in \Omega_0$, $\lim_{n\to \infty} nu_n=1$ and $|\Pi_j(F^n(p))|\sim n^{-(k-j+1)/k}$ for $j=2,\dots, k$ one can see that for any $a\in H$ and $b_k\in\C^*$ there is a point $p\in \Omega_0$ such that $g_1(p)=a$ and $g_k(p)=b_k$. Now fix $a\in H$ and $b_k\in\C^*$. 
Using 
$$
|u|^{1-(k-2)\beta}<|y_{k-1}|<|u|^\beta |y_{k}|
$$
one sees that $\C^*\subseteq g_{k-1}(g_1^{-1}(a)\cap g_k^{-1}(b_k))$, and so on for every $j=2,\dots, k-2$. Therefore $G(\Omega_0)=H\times (\C^*)^{k-1}$, and as in Proposition~\ref{CxCstar} we see that $g_1\colon \Omega\to\C$ is a holomorphic fiber bundle map with fiber $(\C^*)^{k-1}$. Since the transition functions belong to ${\rm GL}_n(\C)$, by \cite[Corollary~8.3.3]{Franz} we obtain that $\Omega$ is biholomorphic to $\C\times(\C^*)^{k-1}$.

Finally, assuming the $k$-tuple $(\lambda_1,\dots, \lambda_k)$ to be admissible in the sense of P\"oschel \cite{Po}, we can locally choose coordinates as in Lemma~\ref{Brunocoord} so that the Fatou component $V$ containing $\Omega$ cannot intersect the coordinate axes in a small neighborhood of the origin. Hence using the estimates for the Kobayashi distance as done in Theorem~\ref{Fatou-Omega}, one can show that $V=\Omega$.
 
\bibliographystyle{amsplain}

\begin{thebibliography}{10}
\bibitem{Ab} M. Abate, {\sl Discrete holomorphic local dynamical systems}. {\bf Holomorphic dynamical systems}, Eds. G. Gentili, J. Guenot, G. Patrizio, Lect. Notes in Math. 1998, Springer, Berlin, 2010, pp. 1-55.

\bibitem{BM} A.F. Beardon, D. Minda
{\sl The hyperbolic metric and geometric function theory}. {\bf Quasiconformal mappings and their applications}, 9--56, Narosa, New Delhi, (2007). 

\bibitem{BZ} F. Bracci, D. Zaitsev, {\sl Dynamics of one-resonant biholomorphisms}.
 J. Eur. Math. Soc.,  15, 1, (2013), 179--200.

\bibitem{BRZ} F. Bracci, J. Raissy, D. Zaitsev, {\sl Dynamics of multi-resonant biholomorphisms}. Int. Math. Res. Not., 20 (2013), 4772--4797.

\bibitem{Brjuno} A.D. Brjuno, {\sl Analytic form of differential equations. I.}, Trans. Mosc. Math. Soc. {\bf 25}, (1971), 131--288.

\bibitem{F} F. Forstneri\v{c}, {\sl Interpolation by holomorphic automorphisms and embeddings in $\C^n$}. J. Geom. Anal. 9, 1, (1999), 93--117.

\bibitem{Franz} F. Forstneri\v{c}, {\bf Stein manifolds and holomorphic mappings. The homotopy principle in complex analysis}. Second edition. Springer, Cham, 2017.

\bibitem{FrancErlend} F. Forstneri\v{c}, E. F. Wold, {\sl Runge tubes in Stein manifolds with the density property}, to appear in Proc. Amer. Math. Soc., https://doi.org/10.1090/proc/14309, arXiv:1801.07645.

\bibitem{H}  M. Hakim, {\sl Analytic transformations of $(\C^p, 0)$ tangent to the identity}, Duke Math. J. 92
(1998), 403--428.

\bibitem{Horm} L. H\"ormander, {\bf An introduction to complex analysis in several variables}, Third edition. North-Holland Mathematical Library, 7. North-Holland Publishing Co., Amsterdam, 1990. 

\bibitem{LP} M. Lyubich, H. Peters, {\sl Classification of invariant Fatou components for dissipative H\'enon maps}. Geom. Funct. Anal., 24, (2014), 887--915.

\bibitem{PVW} H. Peters, L. Vivas, E. Forn\ae ss Wold, {\sl Attracting basins of volume preserving automorphisms of $\C^k$}. Internat. J. Math., Vol. 19 (2008), no. 7,  801--810. 

\bibitem{Po} J. P\"oschel, {\sl On invariant manifolds
of complex analytic mappings near fixed points}.  Expo. Math. 4 (1986), 97--109.

\bibitem{RR} J.P. Rosay, W. Rudin, {\sl Holomorphic maps from $\C^n$ to $\C^n$}. Trans. Amer. Math. Soc., 310, (1998), 47--86.

\bibitem{Serre} J.P. Serre, {\sl Une propri\'et\'e topologique des domaines de Runge}.  Proc. Amer. Math. Soc. 6, (1955), 133--134. 

\bibitem{SV} B. Stens\o nes, L. Vivas,  {\sl Basins of attraction of automorphisms in $\C^3$}. Ergodic Theory Dynam. Systems. 34, (2014),  689--692.

\bibitem{U0} T. Ueda, {\sl Local structure of analytic transformations of two complex variables I}. J. Math. Kyoto Univ., 26, (1986), 233--261.

\bibitem{W1} B. J.  Weickert, {\sl Attracting basins for automorphisms of $\C^2$}. Invent. Math. 132, (1998), 581--605.


\end{thebibliography}

\end{document}